 \newtheorem{thm}{Theorem}[section]
 \newtheorem{cor}[thm]{Corollary}
 \newtheorem{lem}[thm]{Lemma}
 \theoremstyle{definition}
 \newtheorem{defn}[thm]{Definition}
 \theoremstyle{remark}
 \numberwithin{equation}{section}
\newcommand{\ket}[1]{ | #1  \rangle}
\newcommand{\bra}[1]{ \langle #1  |}
\newcommand{\braket}[2]{ \left\langle #1  | #2  \right\rangle}
\newcommand{\ii}{\mathbf{i_1}}
\newcommand{\iii}{\mathbf{i_2}}
\newcommand{\jj}{\mathbf{j}}
\newcommand{\ee}{\mathbf{e_1}}
\newcommand{\eee}{\mathbf{e_2}}
\newcommand{\e}[1]{\mathbf{e_{#1}}}
\newcommand{\ik}[1]{\mathbf{i_{#1}}}
\newcommand{\T}{\mathbb{T}}
\newcommand{\C}{\mathbb{C}}
\newcommand{\D}{\mathbb{D}}
\newcommand{\R}{\mathbb{R}}
\newcommand{\nc}{\mathcal{NC}}
\renewcommand{\(}{\left(}
\renewcommand{\)}{\right)}
\newcommand{\oa}{\left\{}
\newcommand{\fa}{\right\}}
\renewcommand{\[}{\left[}
\renewcommand{\]}{\right]}
\renewcommand{\P}[2]{P_{#1}(#2)}
\newcommand{\dett}[1]{\det{\left( #1 \right) }}
\renewcommand{\exp}[1]{\textrm{exp}\oa #1 \fa}
\newcommand{\PP}[2]{P_{#1}\( #2 \)}
\newcommand{\bo} {\ensuremath{{\bf i_1}}}
\newcommand{\eo} {\ensuremath{{\bf e_1}}}
\newcommand{\et} {\ensuremath{{\bf e_2}}}
\newcommand{\iic}{{\rm \bf{i}}_{\bf{1}}^2}
\newcommand{\iiic}{{\rm \bf{i}}_{\bf{2}}^2}
\newcommand{\h}[1]{{\widehat{#1}}}
\newcommand{\hh}{{\widehat{1}}}
\newcommand{\hhh}{{\widehat{2}}}
\newcommand{\mC}{\ensuremath{\mathbb{C}}}
\newcommand{\scalarmath}[2]{\( #1, #2 \)}
\begin{document}

%-------------------------------------------------------------------------
% editorial commands: to be inserted by the editorial office
%
%\firstpage{1} \volume{228} \Copyrightyear{2004} \DOI{003-0001}
%
%
%\seriesextra{Just an add-on}
%\seriesextraline{This is the Concrete Title of this Book\br H.E. R and S.T.C. W, Eds.}
%
% for journals:
%
%\firstpage{1}
%\issuenumber{1}
%\Volumeandyear{1 (2004)}
%\Copyrightyear{2004}
%\DOI{003-xxxx-y}
%\Signet
%\commby{inhouse}
%\submitted{March 14, 2003}
%\received{March 16, 2000}
%\revised{June 1, 2000}
%\accepted{July 22, 2000}
%
%
%
%---------------------------------------------------------------------------
%Insert here the title, affiliations and abstract:
%

\title[Finite-Dimensional Bicomplex Hilbert Spaces]
{Finite-Dimensional Bicomplex Hilbert\\
Spaces}

%----------Author 1
\author{Rapha\"el Gervais Lavoie}
\address{D\'epartement de physique \br
Universit\'e du Qu\'ebec \`a Trois-Rivi\`eres \br
C.P. 500 Trois-Rivi\`eres \br
Qu\'ebec G9A 5H7 \br
Canada}
\email{raphael.gervaislavoie@uqtr.ca}

\thanks{DR is grateful to the Natural Sciences and
Engineering Research Council of Canada for financial
support.  RGL would like to thank the Qu\'{e}bec
FQRNT Fund for the award of a postgraduate
scholarship.}

%----------Author 2
\author{Louis Marchildon}
\address{D\'epartement de physique \br
Universit\'e du Qu\'ebec \`a Trois-Rivi\`eres \br
C.P. 500 Trois-Rivi\`eres \br
Qu\'ebec G9A 5H7 \br
Canada}
\email{louis.marchildon@uqtr.ca}

%----------Author 2
\author{Dominic Rochon}
\address{D\'epartement de math\'ematiques et d'informatique \br
Universit\'e du Qu\'ebec \`a Trois-Rivi\`eres \br
C.P. 500 Trois-Rivi\`eres \br
Qu\'ebec G9A 5H7 \br
Canada}
\email{dominic.rochon@uqtr.ca}

%----------classification, keywords, date
\subjclass{16D10,30G35,46C05,46C50}

\keywords{Bicomplex numbers, bicomplex quantum mechanics, generalized quantum mechanics, Hilbert spaces, bicomplex matrix, bicomplex linear algebra, generalized linear algebra}

\date{January 1, 2004}
%----------additions
%\dedicatory{To my boss}
%%% ----------------------------------------------------------------------

\begin{abstract}
This paper is a detailed study of finite-dimensional
modules defined on bicomplex numbers.  A number of results
are proved on bicomplex square matrices, linear operators,
orthogonal bases, self-adjoint operators and Hilbert spaces,
including the spectral decomposition theorem.  Applications
to concepts relevant to quantum mechanics, like
the evolution operator, are pointed out.
\end{abstract}

%%% ----------------------------------------------------------------------
\maketitle
%%% ----------------------------------------------------------------------
%\tableofcontents

%%%%%%%%%%%%%%%%%%%%%%%%%%%%%%%%%%%%%%%%%%
%%%%%%%%%%%%%%%%%%%%%%%%%%%%%%%%%%%%%%%%%%
\section{Introduction}

Bicomplex numbers~\cite{Price}, just like quaternions,
are a generalization of complex numbers by means of
entities specified by four real numbers.  These two
number systems, however, are different in two
important ways: quaternions, which form a division
algebra, are noncommutative, whereas bicomplex numbers
are commutative but do not form a division algebra.

Division algebras do not have zero divisors, that is,
nonzero elements whose product is zero.  Many believe
that any attempt to generalize quantum mechanics
to number systems other than complex numbers should
retain the division algebra property.  Indeed
considerable work has been done over the years on
quaternionic quantum mechanics~\cite{Adler}.

In the past few years, however, it was pointed out that several
features of quantum mechanics can be generalized to bicomplex
numbers.  A generalization of Schr\"{o}dinger's equation
for a particle in one dimension was proposed~\cite{Rochon2},
and self-adjoint operators were defined on finite-dimensional
bicomplex Hilbert spaces~\cite{Rochon3}.
Eigenvalues and eigenfunctions of the bicomplex analogue
of the quantum harmonic oscillator Hamiltonian were obtained
in full generality~\cite{GMR}.

The perspective of generalizing quantum mechanics to bicomplex
numbers motivates us in developing further mathematical tools
related to finite-dimensional bicomplex Hilbert spaces and
operators acting on them.  After a brief review of bicomplex
numbers and modules in Section~2, we devote Section~3 to a
number of results in linear algebra that do not depend on the
introduction of a scalar product.  Basic properties of
bicomplex square matrices are obtained and theorems are
proved on bases, idempotent projections and the representation
of linear operators.  In Section~4 we define the bicomplex scalar
product and derive a number of results on Hilbert spaces,
orthogonalization and self-adjoint
operators, including the spectral decomposition theorem.
Section~5 is devoted to applications to unitary operators,
functions of operators and the quantum evolution operator.
We conclude in Section~6.

%%%%%%%%%%%%%%%%%%%%%%%%%%%%%%%%%%%%%%%%%%%%%%%%%%%%%%%%%%%%%%%%%%%%%%%%%%%%%%%%%%%%%%%%%%%%%%%%%%%%%%%%%%%%%%%%%%%%%%%%%%%
%\newpage
\section{Basic Notions}\label{Preliminaries}

This section summarizes known properties of bicomplex
numbers and modules, on which the bulk of this paper is
based.  Proofs and additional results can be found
in~\cite{Price, Rochon2, Rochon3, Rochon1}.

%%%%%%%%%%%%%%%%%%%%%%%%%%%%%%%%%%%%%%%%%%%%%%%%%%%%%%%%%%%%%%%%%%%%%%%%%%%%%%%%%%
\subsection{Bicomplex Numbers}\label{Bicomplex Numbers}

%%%%%%%%%%%%%%%%%%%%%%%%%%%%%%%%%%%%%%%%%%%%%%%%%%%%%
\subsubsection{Definition}\label{Definition of bicomplex numbers}
The set $\T$ of \emph{bicomplex numbers} is defined as
\begin{align}
\mathbb{T}:=\{ w=z_1+z_2\mathbf{i_2}~|~z_1,z_2\in\mathbb{C}(\mathbf{i_1}) \},\label{2.1}
\end{align}
where $\ii$, $\iii$ and $\jj$ are imaginary and hyperbolic
units such that $\iic=-1=\iiic$ and $\mathbf{j}^2=1$.
The product of units is commutative and is defined as
\begin{equation}
\ii\iii=\jj, \qquad \ii\jj=-\iii,
\qquad \iii\jj=-\ii. \label{2.2}
\end{equation}
With the addition and multiplication of two
bicomplex numbers defined in the obvious way,
the set $\mathbb{T}$ makes up a commutative ring.

Three important subsets of $\mathbb{T}$ can be
specified as
\begin{align}
\mathbb{C}(\ik{k}) &:= \{ x+y\ik{k}~|~x,y\in\mathbb{R} \},
\qquad k=1,2 ;\label{2.3}\\
\mathbb{D} &:= \{ x+y\jj~|~x,y\in\mathbb{R} \} .\label{2.4}
\end{align}
Each of the sets $\mathbb{C}(\ik{k})$ is isomorphic
to the field of complex numbers, while $\mathbb{D}$ is
the set of so-called \emph{hyperbolic numbers}.

%%%%%%%%%%%%%%%%%%%%%%%%%%%%%%%%%%%%%%%%%%%%%%%%%%%%%
\subsubsection{Conjugation and Moduli}\label{Bicomplex conjugation}
Three kinds of conjugation can be defined on
bicomplex numbers. With $w$ specified as in~\eqref{2.1}
and the bar ($\,\bar{\mbox{}}\,$) denoting complex
conjugation in $\mathbb{C}(\mathbf{i_1})$,
we define
\begin{equation}
w^{\dag_1}:=\bar{z}_1+\bar{z}_2\mathbf{i_2},\label{2.5}
\qquad w^{\dag_2}:=z_1-z_2\mathbf{i_2},
\qquad w^{\dag_3}:=\bar{z}_1-\bar{z}_2\mathbf{i_2} .
\end{equation}
It is easy to check that each conjugation has the following
properties:
\begin{equation}
(s+t)^{\dag_k}=s^{\dag_k}+t^{\dag_k},
\qquad \left(s^{\dag_k} \right)^{\dag_k}=s,
\qquad (s\cdot t)^{\dag_k}=s^{\dag_k}\cdot t^{\dag_k} .
\label{2.6}
\end{equation}
Here $s,t\in\mathbb{T}$ and $k=1,2,3$.

With each kind of conjugation, one can define a specific
bicomplex modulus as
\begin{subequations}
\begin{align}
|w|_\ii^2&:=w\cdot w^{\dag_2}=z_1^2+z_2^2~\in\C(\ii),\label{2.7a}\\
|w|_\iii^2&:=w\cdot w^{\dag_1}=\left(|z_1|^2-|z_2|^2\right)
+ 2 \, \textrm{Re}(z_1\bar{z}_2)\iii~\in\C(\iii),\label{2.7b}\\
|w|_\jj^2&:=w\cdot w^{\dag_3}=\left(|z_1|^2+|z_2|^2\right)
- 2 \, \textrm{Im}(z_1\bar{z}_2)\jj~\in\D.\label{2.7c}
\end{align}
\end{subequations}
It can be shown that $|s\cdot t|_k^2=|s|_k^2\cdot|t|_k^2$,
where $k=\ii,\iii$ or $\jj$.

In this paper we will often use the Euclidean $\R^4$ norm
defined as
\begin{equation}
|w|:=\sqrt{|z_1|^2+|z_2|^2}=\sqrt{\textrm{Re}(|w|_\jj^2)} \; .
\label{2.8}
\end{equation}
Clearly, this norm maps $\T$ into $\R$.  We have $|w|\geq0$,
and $|w|=0$ if and only if $w=0$. Moreover~\cite{Price},
for all $s,t\in\T$,
\begin{equation}
|s+t|\leq|s|+|t|, \qquad |s\cdot t|\leq \sqrt{2}|s|\cdot|t|.
\label{2.9}
\end{equation}

%%%%%%%%%%%%%%%%%%%%%%%%%%%%%%%%%%%%%%%%%%%%%%%%%%%%%
\subsubsection{Idempotent Basis}\label{Idempotant basis}

Bicomplex algebra is considerably simplified by
the introduction of two bicomplex numbers $\ee$
and $\eee$ defined as
\begin{equation}
\ee:=\frac{1+\jj}{2},\qquad\eee:=\frac{1-\jj}{2}.\label{2.10}
\end{equation}
In fact $\ee$ and $\eee$ are hyperbolic numbers.
They make up the so-called \emph{idempotent basis} of the bicomplex numbers. One easily checks that ($k=1,2$)
\begin{equation}
\mathbf{e}_{\mathbf{1}}^2=\ee,
\quad \mathbf{e}_{\mathbf{2}}^2=\eee,
\quad \ee+\eee=1,
\quad \mathbf{e}_{\mathbf{k}}^{\dag_3}=\e{k} ,
\quad \ee\eee=0 . \label{2.11}
\end{equation}

Any bicomplex number $w$ can be written uniquely as
\begin{equation}
w = z_1+z_2\iii = z_\hh \ee + z_\hhh \eee , \label{2.12}
\end{equation}
where
\begin{equation}
z_\hh= z_1-z_2\ii \quad \mbox{and}
\quad z_\hhh= z_1+z_2\ii \label{2.12a}
\end{equation}
both belong to $\mathbb{C}(\ii)$.  The caret notation
($\hh$ and $\hhh$) will be used systematically in
connection with idempotent decompositions, with the
purpose of easily distinguishing different types
of indices.  As a consequence of~\eqref{2.11}
and~\eqref{2.12}, one can check that if
$\sqrt[n]{z_\hh}$ is an $n$th root of $z_\hh$
and $\sqrt[n]{z_\hhh}$ is an $n$th root of $z_\hhh$,
then $\sqrt[n]{z_\hh} \, \ee + \sqrt[n]{z_\hhh} \, \eee$
is an $n$th root of $w$.

The uniqueness of the idempotent decomposition
allows the introduction of two projection operators as
\begin{align}
P_1: w \in\T&\mapsto z_\hh \in\C(\ii),\label{2.14}\\
P_2: w \in\T&\mapsto z_\hhh \in\C(\ii).\label{2.15}
\end{align}
The $P_k$ ($k = 1, 2$) satisfy
\begin{equation}
[P_k]^2=P_k, \qquad P_1\ee+P_2\eee=\mathbf{Id}, \label{2.16}
\end{equation}
and, for $s,t\in\T$,
\begin{equation}
P_k(s+t)=P_k(s)+P_k(t),
\qquad P_k(s\cdot t)=P_k(s)\cdot P_k(t) .\label{2.17}
\end{equation}

The product of two bicomplex numbers $w$ and $w'$
can be written in the idempotent basis as
\begin{align}
w \cdot w' = (z_\hh \ee + z_\hhh \eee)
\cdot (z'_\hh \ee + z'_\hhh \eee)
= z_\hh z'_\hh \ee + z_\hhh z'_\hhh \eee .
\label{2.20}
\end{align}
Since 1 is uniquely decomposed as $\ee + \eee$,
we can see that $w \cdot w' = 1$ if and only if
$z_\hh z'_\hh = 1 = z_\hhh z'_\hhh$.  Thus $w$ has an inverse
if and only if $z_\hh \neq 0 \neq z_\hhh$, and the
inverse $w^{-1}$ is then equal to
$(z_\hh)^{-1} \ee + (z_\hhh)^{-1} \eee$.  A nonzero $w$ that
does not have an inverse has the property that
either $z_\hh = 0$ or $z_\hhh = 0$, and such a $w$ is
a divisor of zero.  Zero divisors make up the
so-called \emph{null cone} $\nc$.  That terminology comes
from the fact that when $w$ is written as in~\eqref{2.1},
zero divisors are such that $z_1^2 + z_2^2 = 0$.

Any hyperbolic number can be written in the
idempotent basis as $x_\hh \ee + x_\hhh \eee$, with
$x_\hh$ and $x_\hhh$ in~$\R$.  We define the set~$\D^+$
of positive hyperbolic numbers as
\begin{equation}
\D^+:= \{ x_\hh \ee + x_\hhh \eee ~|~ x_\hh, x_\hhh \geq 0 \}.
\label{2.21}
\end{equation}
Since $w^{\dag_3} = \bar{z}_\hh \ee + \bar{z}_\hhh \eee$,
it is clear that $w \cdot w^{\dag_3} \in \D^+$ for any
$w$ in $\T$.

%%%%%%%%%%%%%%%%%%%%%%%%%%%%%%%%%%%%%%%%%%%%%%%%%%%%%%%%%%%%%%%%%%%%%%%%%%%%%%%%%%%%%%%%%%%%%%%%%%%%%%%%%%
\subsection{$\T$-Modules and Linear Operators}\label{Module}

Bicomplex numbers make up a commutative ring.
What vector spaces are to fields, modules are to rings.
A module defined over the ring $\T$ of bicomplex
numbers will be called a $\T$-\emph{module}.

\begin{defn}
A \emph{basis} of a $\T$-module is a set of linearly
independent elements that generate the
module.\footnote{The term ``basis'' here should
not be confused with the same word appearing in
``idempotent basis''.  Elements of the former
belong to the module, while elements of the latter
are (bicomplex) numbers.}
\label{Definition2.1b}
\end{defn}

A finite-dimensional \emph{free} $\T$-module is a
$\T$-module with a finite basis.  That is, $M$
is a finite-dimensional free $\T$-module if there exist
$n$ linearly independent elements (denoted $\ket{m_l}$) such that
any element $\ket{\psi}$ of $M$ can be written as
\begin{equation}
\ket{\psi} = \sum_{l=1}^n w_l\ket{m_l} ,\label{2.22}
\end{equation}
with $w_l \in \T$.  We have used Dirac's notation for
elements of $M$ which, following~\cite{Rochon3},
we will call \emph{kets}.

An important subset $V$ of $M$ is
the set of all kets for which all $w_l$ in~\eqref{2.22}
belong to $\C(\ii)$.  In other words, $V$ is the set
of all $\ket{\psi}$ so that
\begin{equation}
\ket{\psi} = \sum_{l=1}^n x_l\ket{m_l} , \qquad
x_l \in \C(\ii) .
\label{2p}
\end{equation}
It was shown in~\cite{Rochon3}
that $V$ is a vector space over the complex numbers,
and that any $\ket{\psi} \in \T$ can be decomposed
uniquely as
\begin{equation}
\ket{\psi} = \ee\ket{\psi}_{\widehat{1}}
+ \eee\ket{\psi}_{\widehat{2}}
= \ee \P{1}{\ket{\psi}} + \eee \P{2}{\ket{\psi}} .
\label{2.23}
\end{equation}
Here $\ket{\psi}_{\widehat{k}} \in V$ and $P_k$
is a projector from $M$ to $V$ ($k = 1, 2$).
One can show that ket projectors
and idempotent-basis projectors (denoted with the
same symbol) satisfy
\begin{equation}
\P{k}{s \ket{\psi} + t \ket{\phi}}
= \P{k}{s} \P{k}{\ket{\psi}}
+ \P{k}{t} \P{k}{\ket{\phi}} .\label{2.24}
\end{equation}

A \emph{bicomplex linear operator} $A$ is a mapping
from $M$ to $M$ such that, for any $s, t \in \T$
and any $\ket{\psi}, \ket{\phi} \in M$,
\begin{equation}
A (s \ket{\psi} + t \ket{\phi})
= s  A \ket{\psi} + t A \ket{\phi} .
\label{2.27}
\end{equation}
A bicomplex linear operator $A$ can always be
written as
\begin{equation}
A = \ee A_\hh + \eee A_\hhh
= \ee P_1(A) + \eee P_2(A) ,
\label{2.24b}
\end{equation}
where $P_k(A)$ ($k = 1, 2$) was defined in~\cite{Rochon3} as
\begin{equation}
\P{k}{A}\ket{\psi}=\P{k}{A\ket{\psi}} \qquad
\forall\ket{\psi}\in M .\label{2.29}
\end{equation}
Clearly one can write
\begin{equation}
A \ket{\psi} = \ee A_\hh\ket{\psi}_\hh
+ \eee A_\hhh \ket{\psi}_\hhh . \label{2.28}
\end{equation}

\begin{defn}
A ket $\ket{\psi}$ belongs to
the null cone if either $\ket{\psi}_\hh = 0$ or
$\ket{\psi}_\hhh = 0$.  A linear operator
$A$ belongs to the null cone if either $A_\hh = 0$
or $A_\hhh = 0$.
\label{D2.1c}
\end{defn}

The following definition adapts to modules the
concepts of eigenvector and eigenvalue, most useful
in vector space theory.

\begin{defn}
Let $A:M\to M$ be a bicomplex linear operator and let
\begin{equation}
A \ket{\psi} = \lambda \ket{\psi},\label{2..30}
\end{equation}
with $\lambda\in \mathbb{T}$ and $\ket{\psi}\in M$ such that
$\ket{\psi}\notin\nc$.  Then $\lambda$ is called an
\emph{eigenvalue} of $A$ and $\ket{\psi}$ is called an
\emph{eigenket} of $A$.
\label{Definition2.1a}
\end{defn}

Just as eigenvectors are normally restricted to nonzero
vectors, we have restricted eigenkets to kets that are not in the
null cone.  One can show that the eigenket equation \eqref{2..30}
is equivalent to the following system of two eigenvector equations
($k = 1, 2$):
\begin{equation}
A_\h{k}\ket{\psi}_\h{k}
=\lambda_\h{k} \ket{\psi}_\h{k} \; .\label{2.30}
\end{equation}
Here $\lambda=\ee\lambda_\hh+\eee\lambda_\hhh$, and
$\ket{\psi}_{\widehat{k}}$ and $A_{\widehat{k}}$ have
been defined before.  For a complete treatment of module
theory, see \cite{Hartley, Bourbaki}.

%%%%%%%%%%%%%%%%%%%%%%%%%%%%%%%%%%%%%%%%%%%%%%%%%%%%%%%%%%%%%%%%%%%%%%%%%%%%%%%%%%%%%%%%%%%%%%%%%%%%%%%%%%%%%%%%%%%%%%%%%%%
%\newpage
\section{Bicomplex Linear Algebra}\label{Mathematics}

%%%%%%%%%%%%%%%%%%%%%%%%%%%%%%%%%%%%%%%%%%%%%%%%%%%%%%%%%%%%%%%%%%%%%%%%%%%%%%%%%%
\subsection{Square Matrices}

A bicomplex $n\times n$ square matrix $A$ is an array
of $n^2$ bicomplex numbers $A_{ij}$.  Since each
$A_{ij}$ can be expressed in the idempotent basis,
it is clear that
\begin{equation}
A=\ee A_\hh+\eee A_\hhh, \label{3.1a}
\end{equation}
where $A_\hh$ and $A_\hhh$ are two complex
$n \times n$ matrices (i.e. with elements in~$\C(\ii))$.

%%%%%
\begin{thm}
Let $A=\ee A_\hh+\eee A_\hhh$ be an $n\times n$ bicomplex matrix.
Then $\dett{A}=\ee\dett{A_\hh}+\eee\dett{A_\hhh}$.
\label{Theo3.1}
\end{thm}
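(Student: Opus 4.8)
The plan is to exploit the fact, recorded in~\eqref{2.17}, that each projector $P_k$ is a ring homomorphism from $\T$ onto $\C(\ii)$, preserving both finite sums and finite products, together with the classical Leibniz formula for the determinant. Writing each entry as $A_{ij} = \ee\,\P{1}{A_{ij}} + \eee\,\P{2}{A_{ij}}$, the entries of the two complex matrices appearing in~\eqref{3.1a} are $(A_\hh)_{ij} = \P{1}{A_{ij}}$ and $(A_\hhh)_{ij} = \P{2}{A_{ij}}$.

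First I would write the bicomplex determinant through the permutation formula
\begin{equation*}
\dett{A} = \sum_{\sigma \in S_n} \mathrm{sgn}(\sigma) \prod_{i=1}^n A_{i\sigma(i)},
\end{equation*}
which is valid verbatim over any commutative ring, and in particular over $\T$. Applying $P_1$ to both sides and using~\eqref{2.17} to pass $P_1$ through the finite sum and each finite product, I obtain
\begin{equation*}
\P{1}{\dett{A}} = \sum_{\sigma \in S_n} \mathrm{sgn}(\sigma) \prod_{i=1}^n \P{1}{A_{i\sigma(i)}} = \sum_{\sigma \in S_n} \mathrm{sgn}(\sigma) \prod_{i=1}^n (A_\hh)_{i\sigma(i)} = \dett{A_\hh}.
\end{equation*}
The identical computation with $P_2$ gives $\P{2}{\dett{A}} = \dett{A_\hhh}$. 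Since every bicomplex number decomposes uniquely as $\ee\,\P{1}{\cdot} + \eee\,\P{2}{\cdot}$ in view of~\eqref{2.12}, combining the two identities produces $\dett{A} = \ee\,\dett{A_\hh} + \eee\,\dett{A_\hhh}$, which is the assertion.

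The only point needing care is that~\eqref{2.17} states multiplicativity of $P_k$ for two factors, whereas the Leibniz products carry $n$ factors; strictly one therefore argues by a short induction on the number of factors, with~\eqref{2.17} supplying both the base case and the inductive step. Equivalently, one may expand each product $\prod_i A_{i\sigma(i)}$ directly in the idempotent basis, invoking the multiplication rule~\eqref{2.20} together with the relations $\ee^2 = \ee$, $\eee^2 = \eee$, $\ee\eee = 0$ from~\eqref{2.11}: all cross terms vanish, leaving precisely $\ee \prod_i (A_\hh)_{i\sigma(i)} + \eee \prod_i (A_\hhh)_{i\sigma(i)}$, after which summation over $\sigma$ concludes the argument. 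I do not expect a genuine obstacle, since the statement amounts to the observation that the determinant, being a polynomial in the matrix entries, commutes with the ring homomorphisms $P_1$ and $P_2$.
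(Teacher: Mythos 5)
Your proof is correct, but it takes a different route from the paper's. The paper argues column by column: it decomposes each column as $C_i=\ee C_i'+\eee C_i''$, invokes the multilinearity of the determinant over a commutative ring to split $\dett{A}$ into an $\ee$-part and an $\eee$-part one column at a time, and relies (tacitly) on $\ee^2=\ee$, $\eee^2=\eee$, $\ee\eee=0$ to kill the cross terms as the decomposition is propagated through all $n$ columns. You instead expand $\dett{A}$ via the Leibniz permutation formula and observe that $P_1$ and $P_2$, being ring homomorphisms by~\eqref{2.17}, pass through the sum and through each product of $n$ entries, so that $\P{k}{\dett{A}}=\dett{A_{\h{k}}}$; recombining with~\eqref{2.12} gives the claim. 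Your version makes explicit the one point the paper leaves implicit (the induction from two-factor multiplicativity to $n$ factors, equivalently the vanishing of cross terms), and it isolates the conceptually general fact that any polynomial in the entries commutes with $P_1$ and $P_2$ --- which immediately yields the paper's subsequent remarks such as $\dett{AB}=\dett{A}\dett{B}$ and $\dett{A^T}=\dett{A}$ for free. The paper's column-multilinearity argument, for its part, avoids writing out the permutation sum and stays closer to the axiomatic characterization of the determinant. Both proofs are sound; there is no gap in yours.
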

\begin{proof}
We follow the proof given in \cite{Schaum}.
Let $\oa C_i \fa$ be the set of columns of $A$, so that
$A\equiv\( C_1,C_2,\dots,C_n\)$. Let the $i$th column
be such that $C_i=\alpha C'_i+\beta C_i''$, with $\alpha,\beta\in\T$.
Since matrix elements belong to a commutative ring, the
determinant function satisfies
\begin{align*}
& \dett{C_1,C_2,\dots,\alpha C'_i+\beta C''_i,\dots,C_n} \\
& = \alpha\dett{C_1,C_2,\dots,C'_i,\dots,C_n}
+ \beta\dett{C_1,C_2,\dots,C''_i,\dots,C_n}.
\end{align*}
With a bicomplex matrix, we can write
$C_1=\ee C'_1+\eee C''_1$, where columns
$C'_1$ and $C'_2$ have entries in $\C (\ii)$.  Hence
\begin{equation*}
\dett{C_1,\dots,C_n}
=\ee\dett{C'_1,\dots,C_n}+\eee\dett{C''_1,\dots,C_n}.
\end{equation*}
Applying this successively to all columns, we find that
\begin{equation*}
\dett{C_1,\dots,C_n}
=\ee\dett{C'_1,\dots,C'_n}+\eee\dett{C''_1,\dots,C''_n},
\end{equation*}
which is our result.
\end{proof}
%%%%%

From Theorem~\ref{Theo3.1} we immediately see that
$\det{(A)}=0$ if and only if $\det{(A_\hh)}=0=\det{(A_\hhh)}$,
and that $\det{(A)}$ is in the null cone if and only if
$\det{(A_\hh)}=0$ or $\det{(A_\hhh)}=0$.  Moreover,
one can easily prove that for any bicomplex square
matrices $A$ and $B$, $\dett{A^T}=\dett{A}$
(the superscript $T$ denotes the
transpose) and $\dett{AB}=\dett{A}\dett{B}$.

\begin{defn}
A bicomplex square matrix is \emph{singular} if its
determinant is in the null cone.
\label{Definition2.2a}
\end{defn}

\begin{thm}
The inverse $A^{-1}$ of a bicomplex square matrix $A$ exists
if and only if $A$ is not singular, and then $A^{-1}$ is given
by $\ee (A_\hh)^{-1} +\eee (A_\hhh)^{-1}$.
\label{Theo3.2}
\end{thm}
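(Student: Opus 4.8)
The plan is to reduce everything to the two complex component matrices $A_\hh$ and $A_\hhh$ via the idempotent decomposition $A = \ee A_\hh + \eee A_\hhh$, and to use Theorem~\ref{Theo3.1} to convert the non-singularity of $A$ into the simultaneous non-vanishing of $\dett{A_\hh}$ and $\dett{A_\hhh}$. The first step is to record this translation: by Definition~\ref{Definition2.2a}, $A$ is non-singular precisely when $\dett{A}$ lies outside $\nc$; by Theorem~\ref{Theo3.1} and the remark following it, $\dett{A} = \ee\dett{A_\hh} + \eee\dett{A_\hhh}$ lies in the null cone if and only if $\dett{A_\hh} = 0$ or $\dett{A_\hhh} = 0$. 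Hence $A$ is non-singular exactly when $\dett{A_\hh} \neq 0 \neq \dett{A_\hhh}$, which by ordinary complex linear algebra is equivalent to both $A_\hh$ and $A_\hhh$ being invertible over $\C(\ii)$.

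For the sufficiency and the explicit formula I would suppose $A$ non-singular, so that $(A_\hh)^{-1}$ and $(A_\hhh)^{-1}$ exist, and set $B := \ee (A_\hh)^{-1} + \eee (A_\hhh)^{-1}$. A direct computation of $AB$ in the idempotent basis, using the orthogonality relations $\ee^2 = \ee$, $\eee^2 = \eee$ and $\ee\eee = 0$ from~\eqref{2.11} exactly as in the product formula~\eqref{2.20}, collapses the cross terms and gives $AB = \ee\,\mathbf{Id} + \eee\,\mathbf{Id} = \mathbf{Id}$; the same computation yields $BA = \mathbf{Id}$. Thus $A$ is invertible with $A^{-1} = B$, which is the claimed expression.

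For the necessity I would argue by contraposition: if $A$ is singular then $\dett{A} \in \nc$, that is, $\dett{A}$ is a zero divisor. Were $A$ invertible, the multiplicativity $\dett{AB} = \dett{A}\dett{B}$ noted after Theorem~\ref{Theo3.1} would give $\dett{A}\,\dett{A^{-1}} = \dett{\mathbf{Id}} = 1$, forcing the zero divisor $\dett{A}$ to be a unit of $\T$, which is impossible. Hence a singular matrix admits no inverse. Equivalently, one may exhibit a nonzero ket annihilated by $A$: if, say, $\dett{A_\hh} = 0$, choose $v \neq 0$ with $A_\hh v = 0$ and observe that $A(\ee v) = \ee A_\hh v = 0$ while $\ee v \neq 0$.

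I expect this necessity step to be the only delicate point. The temptation is to attempt a componentwise inversion and watch it fail, but the clean argument is the determinant/zero-divisor one, since it isolates precisely why the null-cone obstruction is fatal: invertibility of $A$ would propagate to invertibility of $\dett{A}$, and zero divisors are exactly the nonzero elements of $\T$ barred from being units.
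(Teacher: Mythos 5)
Your proposal is correct and follows essentially the same route as the paper: the explicit formula is verified by the same direct computation in the idempotent basis, and the necessity direction is the paper's determinant-multiplicativity argument ($\dett{A}\dett{A^{-1}}=1$ forces $\dett{A}\notin\nc$), merely phrased contrapositively. The extra kernel-element argument you sketch at the end is a valid alternative the paper does not include, but it is not needed.
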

\begin{proof}
If $A$ is not singular, then $\det{(A_\hh)} \neq 0$ and
$\det{(A_\hhh)} \neq 0$, so that $(A_\hh)^{-1}$ and
$(A_\hhh)^{-1}$ both exist.  But then
\begin{equation*}
\( \ee (A_\hh)^{-1} + \eee (A_\hhh)^{-1} \)
\( \ee A_\hh+\eee A_\hhh \) = \ee I+\eee I=I .
\end{equation*}
Conversely, if $A^{-1}$ exists, then $A^{-1}A=I$.
Hence
\begin{equation*}
1=\dett{I}=\dett{A^{-1}A}=\dett{A^{-1}}\dett{A} ,
\end{equation*}
from which we deduce that $\dett{A}$ is not in the
null cone, and therefore that $A$ is not singular.
\end{proof}
%%%%%

Note that although we wrote $A^{-1}$ as a left inverse,
we could have written it just as well as a right inverse,
and both inverses coincide.

%%%%%%%%%%%%%%%%%%%%%%%%%%%%%%%%%%%%%%%%%
%%%%%%%%%%%%%%%%%%%%%%%%%%%%%%%%%%%%%%%%%
\subsection{Free $\T$-Modules and Bases}\label{Kets modules and basis}

Throughout this section, $M$ will denote an
$n$-dimensional free $\T$-module and $\{ \ket{m_l} \}$
will denote a basis of~$M$.  Any element $\ket{\psi}$
of $M$ can be expressed as in~\eqref{2.22}.

In a vector space, any nonzero vector can be part
of a basis.  Not so for $\T$-modules.

%%%%%
\begin{thm}
No basis element of a free $\T$-module can belong
to the null cone.
\label{Theo3.6}
\end{thm}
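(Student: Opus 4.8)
The plan is to argue by contradiction, using the definition of linear independence over a ring together with the single algebraic identity $\ee\eee = 0$. Suppose, contrary to the claim, that some basis element $\ket{m_l}$ of $M$ lies in the null cone $\nc$. By Definition~\ref{D2.1c} this means that at least one of its idempotent components vanishes; without loss of generality, the two cases being symmetric under the interchange of $\ee$ and $\eee$, I would assume $\ket{m_l}_\hh = 0$.

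Next I would invoke the canonical decomposition~\eqref{2.23}, which lets me write $\ket{m_l} = \ee\ket{m_l}_\hh + \eee\ket{m_l}_\hhh = \eee\ket{m_l}_\hhh$, the first term being zero by assumption. Multiplying this identity by the complementary idempotent $\ee$ and using associativity of the scalar action together with $\ee\eee = 0$ from~\eqref{2.11}, I obtain $\ee\ket{m_l} = (\ee\eee)\ket{m_l}_\hhh = 0$.

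The contradiction is then immediate. The relation $\ee\ket{m_l} = 0$ is a linear combination of the basis elements in which the coefficient of $\ket{m_l}$ is $\ee$ and all other coefficients are zero. Since $\ee \neq 0$, this is a nontrivial dependence relation, contradicting the linear independence demanded of a basis by Definition~\ref{Definition2.1b}. Hence no basis element can lie in $\nc$.

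The point worth flagging, and the only place where genuine care is needed, is conceptual rather than computational: this phenomenon has no analogue over a field. Over $\C$ a single nonzero vector can never satisfy $c\ket{v} = 0$ with $c \neq 0$, whereas over the ring $\T$ the idempotent zero divisors $\ee$ and $\eee$ make exactly such relations possible for kets on the null cone. The whole force of the argument therefore rests on reading the module definition of linear independence literally, namely that a nontrivial relation is allowed to involve only one basis element provided its coefficient is a zero divisor, rather than on any delicate manipulation.
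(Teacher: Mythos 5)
Your proof is correct and follows essentially the same route as the paper's: decompose the basis element via~\eqref{2.23}, use the vanishing of one idempotent component together with $\ee\eee=0$ to produce $\e{k}\ket{m_l}=0$, and conclude that this nontrivial relation contradicts linear independence. Your closing remark about why this has no analogue over a field is a nice observation but not part of the argument itself.
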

\begin{proof}
Let $\ket{s_p}$ be an element of a basis of $M$
(not necessarily the $\{ \ket{m_l} \}$ basis).
By~\eqref{2.23} we can write
\begin{equation}
\ket{s_p} = \ee\ket{s_p}_{\widehat{1}}
+ \eee\ket{s_p}_{\widehat{2}} .
\end{equation}
Suppose that $\ket{s_p}$ belongs to the null
cone.  Then either $\ket{s_p}_{\widehat{1}} = 0$ or
$\ket{s_p}_{\widehat{2}} = 0$.  In the first case
$\ee \ket{s_p} = 0$ and in the second case
$\eee \ket{s_p} = 0$.  Both these equations contradict
linear independence.
\end{proof}
%%%%%

We now define two important subsets of $M$.

%%%%%
\begin{defn}
For $k = 1, 2$, $V_k := \left\{{\bf e_k}\sum_{l=1}^{n}{x_{l}\ket{m_l}}
\mid x_{l}\in\mC(\bo)\right\}$.  Or succinctly, $V_k := {\bf e_k} V$.
\label{Definition3.1}
\end{defn}
%%%%%

Clearly, $V_k$ is an $n$-dimensional
vector space over $\mC(\bo)$, isomorphic
to $V$ and with ${\bf e_k} \ket{m_l}$ as basis elements.
All three vector spaces $V$, $V_1$ and $V_2$ are
useful.  Many results proved in~\cite{Rochon3} used~$V$
in a crucial way, while the computation of harmonic
oscillator eigenvalues and eigenkets in~\cite{GMR}
was based on infinite-dimensional analogues
of~$V_1$ and~$V_2$.

Although $V$ depends on the choice of basis $\{ \ket{m_l} \}$,
$V_1$ and $V_2$ do not.  This comes from the fact that any
element of $V_1$ (for instance) can be written as
$\ee \ket{\psi}$, with $\ket{\psi}$ in~$M$.  Clearly,
this makes no reference to any specific basis.

The module $M$, defined over the ring $\T$, has
$n$ dimensions.  We now show that the set of elements
of~$M$ can also be viewed as a $2n$-dimensional
vector space over~$\C(\ii)$, which we shall call $M'$.
To see this, we write in the idempotent basis the
coefficients $w_l$ of a general element of~$M$.
Making use of~\eqref{2.12} and~\eqref{2.22}, we get
\begin{equation}
\ket{\psi} = \sum_{l=1}^n (\ee w_{l\hh} + \eee w_{l\hhh} )\ket{m_l}
= \sum_{l=1}^n w_{l\hh} \ee \ket{m_l}
+ \sum_{l=1}^n w_{l\hhh} \eee \ket{m_l} .
\label{2.22b}
\end{equation}
It is not difficult to show that the $2n$
elements $\ee \ket{m_l}$ and $\eee \ket{m_l}$
($l = 1 \ldots n$) are linearly independent over~$\C(\ii)$.
This proves our claim and, moreover, proves

%%%%%
\begin{thm}
$M' = V_1 \oplus V_2$.
\label{Theo3.3}
\end{thm}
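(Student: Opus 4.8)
The plan is to verify the two conditions that characterize $M'$ as an internal direct sum of the $\C(\ii)$-subspaces $V_1$ and $V_2$: namely that $M' = V_1 + V_2$ and that $V_1 \cap V_2 = \{0\}$. Before doing either, I would record the subspace data already in hand. By Definition~\ref{Definition3.1} and the remark following it, $V_1$ and $V_2$ are $\C(\ii)$-subspaces of $M'$, spanned respectively by $\{\ee\ket{m_l}\}_{l=1}^n$ and $\{\eee\ket{m_l}\}_{l=1}^n$; and the structural fact established just above the theorem is that these $2n$ kets are linearly independent over $\C(\ii)$.

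For the sum condition I would simply invoke~\eqref{2.22b}, which displays an arbitrary $\ket{\psi}\in M'$ as
\begin{equation*}
\ket{\psi} = \sum_{l=1}^n w_{l\hh}\,\ee\ket{m_l} + \sum_{l=1}^n w_{l\hhh}\,\eee\ket{m_l},
\end{equation*}
where the coefficients $w_{l\hh}$ and $w_{l\hhh}$ lie in $\C(\ii)$. The first sum is an element of $V_1$ and the second an element of $V_2$, so every $\ket{\psi}$ is a sum of a $V_1$-element and a $V_2$-element, giving $M' = V_1 + V_2$.

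For the intersection condition, I would take $\ket{\phi} \in V_1 \cap V_2$ and write it in both forms: membership in $V_1$ gives $\ket{\phi} = \sum_l x_l\,\ee\ket{m_l}$ with $x_l\in\C(\ii)$, and membership in $V_2$ gives $\ket{\phi} = \sum_l y_l\,\eee\ket{m_l}$ with $y_l\in\C(\ii)$. Subtracting yields
\begin{equation*}
\sum_{l=1}^n x_l\,\ee\ket{m_l} - \sum_{l=1}^n y_l\,\eee\ket{m_l} = 0,
\end{equation*}
and the linear independence of the $2n$ generating kets forces every $x_l$ and $y_l$ to vanish, so $\ket{\phi} = 0$. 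Together with the previous paragraph this establishes $M' = V_1 \oplus V_2$.

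Once the independence of the $\ee\ket{m_l}$ and $\eee\ket{m_l}$ is granted, the argument is pure bookkeeping; the only genuine content sits in that independence statement, which the paragraph preceding the theorem defers with ``it is not difficult to show.'' The main obstacle is therefore to pin that claim down, and I would do so by collecting an arbitrary vanishing $\C(\ii)$-combination into a single $\T$-linear combination $\sum_l (x_l\ee + y_l\eee)\ket{m_l} = 0$, then using that $x_l\ee + y_l\eee = 0$ in $\T$ if and only if $x_l = y_l = 0$ (the uniqueness of the idempotent decomposition~\eqref{2.12}), together with the $\T$-linear independence of the basis $\{\ket{m_l}\}$. This reduces everything to facts already available in Sections~\ref{Preliminaries} and~\ref{Mathematics}.
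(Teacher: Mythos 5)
Your proof is correct and follows the same route as the paper: the paper's argument is precisely the decomposition~\eqref{2.22b} together with the linear independence over $\C(\ii)$ of the $2n$ elements $\ee\ket{m_l}$ and $\eee\ket{m_l}$, which it leaves as ``not difficult to show.'' You have merely made explicit the two direct-sum conditions and supplied the deferred independence argument (via uniqueness of the idempotent decomposition and $\T$-linear independence of the basis), which is exactly how the paper intends it to go.
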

%%%%%

It is well known that all bases of a
finite-dimensional vector space have the same number
of elements.  This, however, is not true in general
for modules~\cite{Hartley}.  But for $\T$-modules
we have

%%%%%
\begin{thm}
Let $M$ be a finite-dimensional free $\mathbb{T}$-module.
Then all bases of $M$ have the same number of elements.
\label{Theo dimension}
\label{Theo3.4}
\end{thm}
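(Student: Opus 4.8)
The plan is to reduce the statement to the invariance of dimension for ordinary vector spaces by passing through the space $V_1$. Recall from the preceding discussion that $V_1 = \ee V$ is a vector space over $\C(\ii)$ and, crucially, that it does not depend on the choice of basis of $M$. So the strategy is to show that the $\ee$-projection carries any $\T$-basis of $M$ to a $\C(\ii)$-basis of $V_1$, and then to compare cardinalities inside the single, basis-independent space $V_1$.

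First I would fix the reference basis $\{ \ket{m_l} \}_{l=1}^n$ and recall that $\{ \ee\ket{m_l} \}_{l=1}^n$ is a basis of $V_1$ over $\C(\ii)$, so that $\dim_{\C(\ii)} V_1 = n$. Now let $\{ \ket{s_p} \}_{p=1}^m$ be an arbitrary basis of $M$. The heart of the argument is the claim that $\{ \ee\ket{s_p} \}_{p=1}^m$ is again a basis of $V_1$. For the spanning part, take any $\ee\ket{\psi} \in V_1$, expand $\ket{\psi} = \sum_p w_p \ket{s_p}$ with $w_p \in \T$, and apply $\ee$. Using $\ee w_p = \P{1}{w_p}\ee$ (which follows from $\ee^2 = \ee$, $\ee\eee = 0$ and the idempotent decomposition $w_p = \P{1}{w_p}\ee + \P{2}{w_p}\eee$), one obtains $\ee\ket{\psi} = \sum_p \P{1}{w_p}(\ee\ket{s_p})$ with $\P{1}{w_p} \in \C(\ii)$, so the projected set spans $V_1$.

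For linear independence over $\C(\ii)$, I would suppose $\sum_p x_p(\ee\ket{s_p}) = 0$ with $x_p \in \C(\ii)$. Rewriting this as $\sum_p (x_p\ee)\ket{s_p} = 0$ and invoking the $\T$-linear independence of $\{ \ket{s_p} \}$ forces $x_p\ee = 0$ for every $p$; since $x_p \in \C(\ii)$ and $x\ee = 0$ implies $x = 0$ for such $x$, each $x_p$ vanishes. This establishes the claim, and because the $\ket{s_p}$ are distinct and independent the projected family genuinely has $m$ elements. With both $\{ \ee\ket{m_l} \}$ and $\{ \ee\ket{s_p} \}$ exhibited as bases of the same finite-dimensional $\C(\ii)$-vector space $V_1$, the classical invariance of dimension yields $m = n$, as desired.

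The step I expect to be most delicate is the linear-independence direction: one must move cleanly between $\T$-coefficients and $\C(\ii)$-coefficients through the idempotent projection, and the argument hinges on the fact that $\ee$, although a zero divisor in $\T$, annihilates no nonzero element of $\C(\ii)$. Theorem~\ref{Theo3.6}, that no basis element lies in $\nc$, is consistent with this and guarantees a fortiori that none of the $\ee\ket{s_p}$ collapses to zero.
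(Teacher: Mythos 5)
Your proposal is correct and follows essentially the same route as the paper: project the arbitrary basis $\{\ket{s_p}\}$ into the basis-independent space $V_1=\ee V$ via multiplication by $\ee$, show the image is a $\C(\ii)$-basis of $V_1$, and invoke invariance of dimension for vector spaces. You actually spell out the linear-independence step (that $x_p\ee=0$ with $x_p\in\C(\ii)$ forces $x_p=0$) more explicitly than the paper does, which is a welcome addition rather than a deviation.
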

\begin{proof}
Let $\{ \ket{m_l}, l = 1\ldots n\}$ and $\{ \ket{s_p}, p = 1\ldots m \}$
be two bases of $M$.  We can write
\begin{align*}
M &= \left\{\sum_{p=1}^{m}{w_{p}\ket{s_p}}
\mid w_{p} \in \mathbb{T}\right\} \\
&= \left\{\sum_{p=1}^{m}{(P_1(w_{p})\eo
+ P_2(w_{p})\et)\ket{s_p}} \mid w_{p} \in \mathbb{T}\right\} ,
\end{align*}
where, as usual, $P_1$ and $P_2$ are defined
with respect to the $\ket{m_l}$.  Since
\begin{equation*}
\(P_1(w_{p})\eo+P_2(w_{p})\et\)\ket{s_p}
= P_1(w_{p})\eo\ket{s_p}+P_2(w_{p})\et\ket{s_p}
\end{equation*}
and $P_k(w_{p})\in\mC(\bo)$ for $k=1,2$, we see that
$\{{\bf e_k}\ket{s_p} ~|~ p = 1\ldots m \}$ is a basis of~$V_k$.
But
\begin{equation*}
\mbox{dim} (V_1) = \mbox{dim} (V_2) = \mbox{dim} (V) = n ,
\end{equation*}
whence $m = n$.
\end{proof}
%%%%%

With the projections $P_k$ defined with respect
to the $\ket{m_l}$, it is obvious that
$P_k (\ket{m_l}) = \ket{m_l}$ ($k = 1, 2$).
This is a direct consequence of the identity
$\ket{m_l} = \ee \ket{m_l} + \eee \ket{m_l}$.
Hence $\{ P_k (\ket{m_l}) ~|~ l = 1\ldots n \}$
is a basis of $V$.  It turns out that the
projection of any basis of $M$ is a basis of~$V$.

%%%%%
\begin{thm}
Let $P_1$ and $P_2$ be the projections defined
with respect to a basis $\{ \ket{m_l} \}$ of $M$,
and let $V$ be the associated vector space.  If
$\{ \ket{s_l} \}$ is another basis of~$M$, then
$\{ P_1(\ket{s_l}) \}$ and $\{ P_2(\ket{s_l}) \}$
are bases of $V$.
\label{Theo projection}
\label{Theo3.7}
\end{thm}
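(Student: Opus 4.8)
The plan is to verify the two defining properties of a basis --- spanning and linear independence --- directly for the family $\{P_1(\ket{s_l})\}$, the argument for $\{P_2(\ket{s_l})\}$ being identical with $\eee$ in place of $\ee$. Two small facts will be used throughout. First, for every $\ket{\phi}\in V$ we have $P_k(\ket{\phi})=\ket{\phi}$, since the coefficients of $\ket{\phi}$ already lie in $\C(\ii)$ and the idempotent decomposition~\eqref{2.23} is then trivial. Second, $\e{k}P_k(\ket{s_l})=\e{k}\ket{s_l}$, which follows at once from~\eqref{2.23} together with $\ee^2=\ee$, $\eee^2=\eee$ and $\ee\eee=0$.

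For spanning, I would take an arbitrary $\ket{\phi}\in V$ and expand it in the basis $\{\ket{s_l}\}$ of $M$, writing $\ket{\phi}=\sum_l w_l\ket{s_l}$ with $w_l\in\T$. Applying $P_1$ and using the additivity and product rule~\eqref{2.24} gives $P_1(\ket{\phi})=\sum_l P_1(w_l)\,P_1(\ket{s_l})$. Since $\ket{\phi}\in V$ forces $P_1(\ket{\phi})=\ket{\phi}$, and since each $P_1(w_l)\in\C(\ii)$, this displays $\ket{\phi}$ as a $\C(\ii)$-linear combination of the $P_1(\ket{s_l})$. Hence these vectors span $V$.

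For linear independence, I would begin with a relation $\sum_l c_l P_1(\ket{s_l})=0$ with $c_l\in\C(\ii)$ and multiply through by $\ee$. Using $\ee P_1(\ket{s_l})=\ee\ket{s_l}$ this becomes $\sum_l (c_l\ee)\ket{s_l}=0$. As $\{\ket{s_l}\}$ is a basis of $M$ and so linearly independent over $\T$, every coefficient vanishes: $c_l\ee=0$. Finally, for $c_l\in\C(\ii)$ the equation $c_l\ee=0$ forces $c_l=0$, because the first idempotent component of $c_l\ee$ is precisely $c_l$. Thus the $P_1(\ket{s_l})$ are linearly independent over $\C(\ii)$, and combined with the spanning step they form a basis of $V$.

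I expect the only real care to be needed in the bookkeeping among $V$, $V_1$ and $V_2$ and in the repeated use of $\e{k}P_k(\ket{s_l})=\e{k}\ket{s_l}$; once this identity and the fact that $P_k$ restricts to the identity on $V$ are fixed, both halves are routine. A parallel route, essentially equivalent, would invoke the proof of Theorem~\ref{Theo3.4}, where $\{\e{k}\ket{s_l}\}$ was already shown to be a basis of $V_k$, and then carry that basis back to $V$ via the isomorphism $\ket{\phi}\mapsto\e{k}\ket{\phi}$; the direct argument above simply avoids reopening that proof.
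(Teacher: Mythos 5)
Your proposal is correct and follows essentially the same route as the paper: for independence you convert the $\C(\ii)$-relation into a $\T$-relation by attaching $\ee$ (the paper does this by defining $c_l:=\ee\alpha_l$ and checking both idempotent projections, you by multiplying through by $\ee$ and using $\ee P_1(\ket{s_l})=\ee\ket{s_l}$), and for spanning you expand in $\{\ket{s_l}\}$ over $\T$ and apply $P_1$, differing only in that the paper first lifts $\ket{\phi}$ to $\ee\ket{\phi}$ while you use $P_1|_V=\mathrm{id}$ directly. Both variations are sound and the bookkeeping you flag is handled correctly.
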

\begin{proof}
We give the proof for $P_1$, the one for $P_2$
being similar.  We first show that the $P_1(\ket{s_l})$
are linearly independent, and then that they generate $V$.

Let $\alpha_l\in\mathbb{C}(\bo)$ for $l=1 \dots n$ and let
\begin{equation*}
\sum_{l=1}^{n}\alpha_l \P{1}{\ket{s_l}}=0 .
\end{equation*}
For $l = 1\ldots n$, define $c_l:=\eo\alpha_l + \et\cdot 0$.
Making use of~\eqref{2.24}, it is easy to see that
$\sum_{l=1}^{n}c_l\ket{s_l}=0$, for
\begin{align*}
\PP{1}{\sum_{l=1}^{n}c_l\ket{s_l}}
&= \sum_{l=1}^{n}\P{1}{c_l}\P{1}{\ket{s_l}}
= \sum_{l=1}^{n}\alpha_l\P{1}{\ket{s_l}}= 0 , \\
\PP{2}{\sum_{l=1}^{n}c_l\ket{s_l}}
&= \sum_{l=1}^{n}\P{2}{c_l}\P{2}{\ket{s_l}}
= \sum_{l=1}^{n}0\cdot \P{2}{\ket{s_l}} = 0.
\end{align*}
The linear independence (in $M$) of $\oa\ket{s_l}\fa$
implies that $\forall l$, $c_l=0$ and therefore
$\alpha_l=0$.

To show that the $P_1(\ket{s_l})$ generate $V$,
let $\ket{\psi_1}\in V$ and consider the ket
\begin{equation*}
\ket{\psi}:=\eo\ket{\psi_1}+\et\cdot 0\in M.
\end{equation*}
Since the (bicomplex) span of $\oa\ket{s_l}\fa$ is $M$,
there exist $d_l\in\mathbb{T}$ such that
\begin{equation*}
\sum_{l=1}^{n}d_l\ket{s_l} = \ket{\psi}.
\end{equation*}
Therefore,
\begin{equation*}
\ket{\psi_1} = \P{1}{\ket{\psi}}
= \PP{1}{\sum_{l=1}^{n}d_l\ket{s_l}}
= \sum_{l=1}^{n}\P{1}{d_l}\P{1}{\ket{s_l}} .
\end{equation*}
Thus, the (complex) span of $\oa\P{1}{\ket{s_l}}\fa$
is the vector space $V$ and $\oa \P{1}{\ket{s_l}}\fa$
is a basis of~$V$.
\end{proof}

%%%%%
\begin{cor}
Let $\ket{\psi}$ be in $M$. If $\ket{\psi}_{\widehat{1}}$
$(\ket{\psi}_{\widehat{2}})$ vanishes, then the projection
$P_1$ $(P_2)$ of all components of $\ket{\psi}$ in any
basis vanishes.
\label{Lemma projection}
\label{Corollary3.3}
\end{cor}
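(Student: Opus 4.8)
The plan is to convert the basis-dependent hypothesis $\ket{\psi}_\hh=0$ into the intrinsic statement $\ee\ket{\psi}=0$, and then read off the vanishing of the $P_1$-projections of the components from the linear independence of the idempotent-scaled basis. I treat only the $P_1$ case, the $P_2$ case being word-for-word identical with $\ee$, $\P{1}{\cdot}$, $V_1$ replaced by $\eee$, $\P{2}{\cdot}$, $V_2$.

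First I would note that the hypothesis immediately yields $\ee\ket{\psi}=0$: multiplying the decomposition~\eqref{2.23} by $\ee$ and using $\ee^2=\ee$ and $\ee\eee=0$ from~\eqref{2.11} gives $\ee\ket{\psi}=\ee\ket{\psi}_\hh=0$. The point to emphasize is that, although $\ket{\psi}_\hh$ is defined relative to the reference basis $\{\ket{m_l}\}$, the object $\ee\ket{\psi}$ lies in the basis-free subspace $V_1$; the vanishing of $\ee\ket{\psi}$ is therefore an intrinsic condition that may be exploited no matter which basis we later expand in. (In fact $\ee\ket{\psi}=0$ is equivalent to $\ket{\psi}_\hh=0$, since $\{\ee\ket{m_l}\}$ is a basis of $V_1$, but only the stated implication is needed.)

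Next I would fix an arbitrary basis $\{\ket{s_l}\}$ of $M$ and write $\ket{\psi}=\sum_{l=1}^n d_l\ket{s_l}$ with $d_l\in\T$. Writing each $d_l$ in the idempotent basis gives $\ee d_l=\P{1}{d_l}\ee$, so multiplying the expansion by $\ee$ and using $\ee\ket{\psi}=0$ yields
\begin{equation*}
0=\sum_{l=1}^n \P{1}{d_l}\,\ee\ket{s_l},\qquad \P{1}{d_l}\in\C(\ii).
\end{equation*}
By the argument in the proof of Theorem~\ref{Theo3.4}, $\{\ee\ket{s_l}\}$ is a basis of $V_1$ and hence linearly independent over $\C(\ii)$; therefore $\P{1}{d_l}=0$ for every $l$, which is exactly the assertion that the $P_1$-projection of every component of $\ket{\psi}$ in the basis $\{\ket{s_l}\}$ vanishes. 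Since the basis was arbitrary, this holds in any basis. The only real subtlety is the first step: once the hypothesis has been recast as the intrinsic equation $\ee\ket{\psi}=0$, the remainder is just the linear independence already established.
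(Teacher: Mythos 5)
Your proof is correct and follows essentially the same route as the paper's: expand $\ket{\psi}$ in an arbitrary basis, kill the $\hh$-part, and conclude from the linear independence of the idempotent projections of that basis. The only cosmetic difference is that you multiply by $\ee$ and invoke the linear independence of $\{\ee\ket{s_l}\}$ in $V_1$ (from the proof of Theorem~\ref{Theo3.4}), whereas the paper applies $P_1$ directly and invokes the linear independence of $\{P_1(\ket{s_l})\}$ in $V$ (Theorem~\ref{Theo3.7}); these are equivalent under the isomorphism $V_1\cong V$.
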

\begin{proof}
Let $\{ \ket{s_l} \}$ be any basis of $M$ and let
$\ket{\psi}_{\widehat{1}} = 0$ (the case with
$\ket{\psi}_{\widehat{2}}$ is similar).  One can write
\begin{equation*}
\ket{\psi} = \sum_{l=1}^n{c_l\ket{s_l}} .
\end{equation*}
Making use of~\eqref{2.23} and~\eqref{2.24}, we get
\begin{equation*}
0 = \ket{\psi}_{\widehat{1}} = \P{1}{\ket{\psi}}
= \sum_{l=1}^n{\P{1}{c_l}P_1(\ket{s_l})} .
\end{equation*}
Since the $P_1(\ket{s_l})$ are linearly independent,
we find that $\forall l$, $\P{1}{c_l} = 0$.
\end{proof}
%%%%%

It is well known that two arbitrary bases of a
finite-dimensional vector space are related by
a nonsingular matrix, where in that context
nonsingular means having nonvanishing
determinant.  Definition~\ref{Definition2.2a}
of a singular bicomplex matrix (as one whose
determinant is in the null cone) leads to the following
analogous theorem.

\begin{thm}
Any two bases of $M$ are related by a nonsingular matrix.
\label{Theo basis}
\label{Theo3.5}
\end{thm}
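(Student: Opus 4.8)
The plan is to reduce the statement to the already-established correspondence between bases of $M$ and bases of the complex vector spaces $V_1$ and $V_2$, and then invoke the classical change-of-basis fact for vector spaces in each idempotent component. Let $\{\ket{m_l}\}$ and $\{\ket{s_p}\}$ be two bases of $M$; by Theorem~\ref{Theo3.4} both have $n$ elements. First I would write each new basis element in terms of the old one as $\ket{s_p} = \sum_{l=1}^n A_{lp}\ket{m_l}$ with $A_{lp}\in\T$, so that the matrix $A=(A_{lp})$ is the bicomplex change-of-basis matrix. By~\eqref{3.1a} we decompose $A = \ee A_\hh + \eee A_\hhh$, where $A_\hh$ and $A_\hhh$ are the complex matrices with entries $P_1(A_{lp})$ and $P_2(A_{lp})$ respectively.

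Next I would apply the projection $P_k$ to the defining relation. Using~\eqref{2.24} together with $P_k(\ket{m_l})=\ket{m_l}$, one gets $P_k(\ket{s_p}) = \sum_{l=1}^n P_k(A_{lp})\,\ket{m_l}$, which shows that $A_\hh$ (respectively $A_\hhh$) is precisely the matrix expressing the basis $\{P_1(\ket{s_p})\}$ (respectively $\{P_2(\ket{s_p})\}$) of $V$ in terms of the basis $\{\ket{m_l}\}$ of $V$. Here Theorem~\ref{Theo3.7} is what guarantees that $\{P_k(\ket{s_p})\}$ is indeed a basis of $V$, so that each of these is a genuine change-of-basis within a finite-dimensional $\C(\ii)$-vector space. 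The classical theory then tells us that a matrix relating two bases of a vector space is invertible, hence $\det(A_\hh)\neq 0$ and $\det(A_\hhh)\neq 0$.

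Finally I would assemble the two complex non-vanishing determinants into the bicomplex conclusion. By Theorem~\ref{Theo3.1} we have $\det(A)=\ee\det(A_\hh)+\eee\det(A_\hhh)$, and since neither $\det(A_\hh)$ nor $\det(A_\hhh)$ vanishes, the remark following Theorem~\ref{Theo3.1} shows that $\det(A)$ is not in the null cone. By Definition~\ref{Definition2.2a} this means exactly that $A$ is nonsingular, which is the claim.

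The only delicate point — and the step I would treat most carefully — is verifying that applying $P_k$ to the expansion $\ket{s_p}=\sum_l A_{lp}\ket{m_l}$ really produces the complex matrix $A_\hh$ (respectively $A_\hhh$) and not some twisted version of it; this hinges on using~\eqref{2.24} correctly and on the identity $P_k(\ket{m_l})=\ket{m_l}$, which holds because the projections are defined with respect to the $\{\ket{m_l}\}$ basis. Once that identification is clean, everything else is a direct appeal to prior results, so I do not expect any serious obstacle beyond this bookkeeping.
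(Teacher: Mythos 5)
Your proof is correct, but it follows a genuinely different route from the paper's. The paper never projects onto the idempotent components: it writes the change-of-basis matrices in \emph{both} directions, $\ket{m_l} = \sum_p L_{pl}\ket{s_p}$ and $\ket{s_p} = \sum_j N_{jp}\ket{m_j}$, substitutes one into the other, and uses linear independence of the $\ket{m_j}$ to conclude $NL=I$; nonsingularity then follows from Theorem~\ref{Theo3.2} (a bicomplex matrix has an inverse iff it is nonsingular). That argument is purely ring-theoretic and needs neither the determinant decomposition of Theorem~\ref{Theo3.1} nor the projection result of Theorem~\ref{Theo3.7}. Your argument instead fixes one change-of-basis matrix $A$, projects the defining relation with $P_k$ (using~\eqref{2.24} and $P_k(\ket{m_l})=\ket{m_l}$, which you correctly flag as the delicate bookkeeping step), identifies $A_\hh$ and $A_\hhh$ as classical change-of-basis matrices on the complex vector space $V$ via Theorem~\ref{Theo3.7}, and reassembles $\dett{A}=\ee\dett{A_\hh}+\eee\dett{A_\hhh}$ to see it avoids the null cone. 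Both proofs are valid and all the results you cite precede the theorem, so there is no circularity. The paper's version is shorter and more self-contained; yours makes the componentwise structure explicit, yields the extra information that $\det(A_{\widehat{k}})$ is exactly the classical change-of-basis determinant on $V$, and illustrates the general strategy (used repeatedly later in the paper) of reducing a $\T$-module statement to its two $\C(\ii)$ shadows.
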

\begin{proof}
Let $\oa \ket{m_l} \fa$ and $\oa \ket{s_l} \fa$ be two bases of $M$.
From Theorem~\ref{Theo dimension}, we know that both bases have the
same dimension $n$. We can write
\begin{equation*}
\ket{m_l} = \sum_{p=1}^n L_{pl} \ket{s_p} ,
\qquad \ket{s_p} = \sum_{j=1}^n N_{jp} \ket{m_j} ,
\label{T;3.2}
\end{equation*}
where $L$ and $N$ are both $n \times n$ bicomplex matrices.
But then
\begin{equation*}
\ket{m_l} = \sum_{p=1}^n L_{pl} \sum_{j=1}^n N_{jp} \ket{m_j}
= \sum_{j=1}^n {\oa \sum_{p=1}^n N_{jp} L_{pl} \fa} \ket{m_j} .
\end{equation*}
This means that for any $l$,
\begin{equation*}
\sum_{j=1}^n {\oa \delta_{jl}-(NL)_{jl} \fa \ket{m_j}} =0 .
\end{equation*}
Since the $\ket{m_j}$ are linearly independent, we
get that $\delta_{jl}-(NL)_{jl} = 0$ for all $l$ and $j$,
or $NL=I$.  Hence $L$ and $N$ are inverses of each other
and, by Theorem~\ref{Theo3.2}, nonsingular.
\end{proof}

%%%%%%%%%%%%%%%%%%%%%%%%%%%%%%%%%%%%%%%%%%%%%%%%%%%%%%%%%%%%%%%%%%%%%%%%%%%%%%%%%%%%%%%%
\subsection{Linear Operators}

In this section we first prove a result on the composition
of two linear operators, and then establish the equivalence
between linear operators and square matrices for
bicomplex numbers.

%%%%%
\begin{thm}
Let $A,B:M\to M$ be two bicomplex linear operators.
Then for $k = 1, 2$,
\begin{enumerate}
\item $P_k(A+B)=P_k(A)+P_k(B)$,
\item $P_k(A\circ B)=P_k(A)\circ P_k(B)$,
\end{enumerate}
where $A\circ B$ denotes the operator that acts on an arbitrary
$\ket{\psi}$ as $\(A\circ B\)\ket{\psi} = A\(B\ket{\psi}\)$.
\label{Theo3.8}
\end{thm}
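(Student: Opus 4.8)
The plan is to reduce both identities to the defining property of the idempotent projections $P_k$ on operators, namely the relation $P_k(A)\ket{\psi} = P_k(A\ket{\psi})$ from~\eqref{2.29}, together with the already-established behavior of the ket-projectors on linear combinations. I expect the additivity statement (1) to be essentially immediate, while the composition statement (2) will require a short but careful manipulation that moves $P_k$ across the action of the operators one at a time. The key fact I will lean on throughout is that $P_k$ applied to a ket respects the module operations in the sense of~\eqref{2.24}, and in particular that $P_k(s\ket{\phi}) = P_k(s)\,P_k(\ket{\phi})$.

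For part (1), I would fix an arbitrary $\ket{\psi}\in M$ and compute
\begin{equation*}
P_k(A+B)\ket{\psi} = P_k\big((A+B)\ket{\psi}\big)
= P_k\big(A\ket{\psi} + B\ket{\psi}\big)
= P_k(A\ket{\psi}) + P_k(B\ket{\psi}),
\end{equation*}
where the last equality is the additivity of the ket-projector. Rewriting each term via~\eqref{2.29} gives $P_k(A)\ket{\psi} + P_k(B)\ket{\psi} = \big(P_k(A)+P_k(B)\big)\ket{\psi}$. Since $\ket{\psi}$ was arbitrary, the two operators agree on all of $M$ and hence are equal.

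For part (2), I would again fix $\ket{\psi}\in M$ and work with $\(A\circ B\)\ket{\psi} = A\(B\ket{\psi}\)$. Applying~\eqref{2.29} to the operator $A$ acting on the ket $B\ket{\psi}$ yields $P_k\big(A(B\ket{\psi})\big) = P_k(A)\,P_k(B\ket{\psi})$, and a second application of~\eqref{2.29}, now to $B$ acting on $\ket{\psi}$, turns $P_k(B\ket{\psi})$ into $P_k(B)\ket{\psi}$. Chaining these gives
\begin{equation*}
P_k(A\circ B)\ket{\psi}
= P_k(A)\,\big(P_k(B)\ket{\psi}\big)
= \big(P_k(A)\circ P_k(B)\big)\ket{\psi},
\end{equation*}
and arbitrariness of $\ket{\psi}$ finishes the argument.

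The main obstacle, such as it is, lies in part (2): one must be careful that the intermediate object $P_k(B\ket{\psi})$ is correctly interpreted as $P_k(B)$ acting on $\ket{\psi}$ rather than as a scalar projection, so that the two applications of the defining identity~\eqref{2.29} compose cleanly into the operator $P_k(A)\circ P_k(B)$. Once the bookkeeping of which projector acts on which object is kept straight, no genuine computation remains.
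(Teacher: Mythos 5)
Your part (1) is exactly the paper's argument: evaluate on an arbitrary ket, use \eqref{2.29} to pull $P_k$ inside, and use the additivity of the ket-projector from \eqref{2.24}. No issues there.

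In part (2), the step you attribute to \eqref{2.29} is not what \eqref{2.29} gives. Applied to $A$ with the ket $B\ket{\psi}$, equation \eqref{2.29} yields
\begin{equation*}
P_k\bigl(A(B\ket{\psi})\bigr) = P_k(A)\,\bigl(B\ket{\psi}\bigr),
\end{equation*}
that is, the operator $P_k(A)$ acting on the \emph{full} ket $B\ket{\psi}$ — whereas you write $P_k(A)$ acting on the projected ket $P_k(B\ket{\psi})$. The two are indeed equal, but that equality is the nontrivial content of this step: one must know that $P_k(A)\ket{\chi}$ depends only on the component $\ket{\chi}_{\widehat{k}}$, equivalently that $P_k(A\ket{\chi}) = P_k(A)\,P_k(\ket{\chi})$, which is a multiplicativity statement of the same flavour as the one being proved and does not follow from \eqref{2.29} alone. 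It does hold: writing $\ket{\chi}=\ee\ket{\chi}_{\widehat{1}}+\eee\ket{\chi}_{\widehat{2}}$, using linearity of $A$, \eqref{2.24}, and $P_1(\ee)=1=P_2(\eee)$, $P_1(\eee)=0=P_2(\ee)$ gives $P_k(A\ket{\chi})=A_{\widehat{k}}\ket{\chi}_{\widehat{k}}$; alternatively one can simply invoke \eqref{2.28}. The paper's proof supplies exactly this missing ingredient by expanding both $A$ and $B$ in the idempotent basis via \eqref{2.24b} and multiplying out, so the cross terms visibly vanish before $P_k$ is applied. Your argument becomes correct once you insert that one-line computation or cite \eqref{2.28} in place of \eqref{2.29} at that point; as written, the justification for the key equality is not actually provided.
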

\begin{proof}
To prove the first part, we let $\ket{\psi}\in M$ and
make use of~\eqref{2.24} and~\eqref{2.29}.  We get
\begin{align*}
\( \P{k}{A+B} \)\ket{\psi}
&= \P{k}{ (A+B)\ket{\psi} } = \P{k}{A\ket{\psi}+B\ket{\psi}} \\
&= \P{k}{A\ket{\psi}}+\P{k}{B\ket{\psi}}
= \P{k}{A}\ket{\psi}+\P{k}{B}\ket{\psi} \\
&= \left( \P{k}{A}+\P{k}{B} \right)\ket{\psi}.
\end{align*}
To prove the second part we use~\eqref{2.24b}
and~\eqref{2.29} to get
\begin{align*}
\( A\circ B \)\ket{\psi} &=A\(B\ket{\psi}\) \\
&= \[ \ee \P{1}{A}+\eee \P{2}{A} \]
\big\{\[ \ee \P{1}{B}+\eee\P{2}{B} \]\ket{\psi}\big\}\\
&= \[ \ee\P{1}{A}+\eee\P{2}{A} \]
\[ \ee\P{1}{B\ket{\psi}}+\eee\P{2}{B\ket{\psi}} \]\\
&= \ee\P{1}{A}\P{1}{B\ket{\psi}}+\eee\P{2}{A}\P{2}{B\ket{\psi}}.
\end{align*}
Applying $P_k$ on both sides, we find that
$\P{k}{\( A\circ B \)\ket{\psi}}=\P{k}{A}\P{k}{B\ket{\psi}}$
or, equivalently, $\P{k}{A\circ B}=\P{k}{A}\circ\P{k}{B}$.
\end{proof}
%%%%%

%%%%%
\begin{thm}
The action of a linear bicomplex operator on $M$ can be
represented by a bicomplex matrix.
\label{Theo3.9}
\end{thm}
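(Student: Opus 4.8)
The plan is to imitate the standard construction of a matrix representation from vector-space theory, using the fixed basis $\oa \ket{m_l} \fa$ of $M$ together with the defining linearity~\eqref{2.27} of $A$. First I would apply $A$ to each basis ket. Since $A\ket{m_l}$ is again an element of $M$, the basis property allows it to be expanded, uniquely, as
\begin{equation*}
A\ket{m_l} = \sum_{j=1}^{n} A_{jl}\ket{m_j}, \qquad A_{jl}\in\T ,
\end{equation*}
and the $n^2$ coefficients $A_{jl}$ are then assembled into a bicomplex $n\times n$ matrix, which is the candidate representation of~$A$.

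Next I would verify that this matrix reproduces the action of $A$ on an arbitrary ket. Writing $\ket{\psi}=\sum_{l}w_l\ket{m_l}$ as in~\eqref{2.22} and invoking~\eqref{2.27}, I would obtain
\begin{equation*}
A\ket{\psi} = \sum_{l=1}^{n} w_l\, A\ket{m_l}
= \sum_{j=1}^{n}\( \sum_{l=1}^{n} A_{jl} w_l \)\ket{m_j} ,
\end{equation*}
so that the components of $A\ket{\psi}$ in the basis are obtained from those of $\ket{\psi}$ by ordinary bicomplex matrix multiplication. Because the expansion coefficients of any ket in $\oa \ket{m_l} \fa$ are unique, the assignment $A\mapsto(A_{jl})$ is well defined; one checks readily that it sends operator addition to matrix addition and, using Theorem~\ref{Theo3.8}, operator composition to matrix multiplication.

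To connect the representation with the idempotent structure of Section~3, I would finally apply $P_k$ to the defining expansion of $A\ket{m_l}$ and use~\eqref{2.24}, which shows that the entries of the complex matrix $A_\h{k}$ appearing in~\eqref{2.24b} are precisely $\P{k}{A_{jl}}$. Hence $(A_{jl}) = \ee\,\P{1}{A_{jl}} + \eee\,\P{2}{A_{jl}}$, in agreement with the matrix decomposition~\eqref{3.1a}. I do not expect a genuine obstacle in this argument: every step rests only on the uniqueness of basis expansions and on linearity. The single point demanding care is the index bookkeeping needed to keep the matrix acting on the left of the column of components, so that the conventions here match those used for square matrices earlier in the section.
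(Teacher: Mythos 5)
Your proposal is correct and follows essentially the same route as the paper: expand $A\ket{m_l}$ in the basis, assemble the coefficients $A_{jl}$ into a bicomplex matrix, and use linearity~\eqref{2.27} together with uniqueness of basis expansions to show that the components of $A\ket{\psi}$ are obtained by matrix multiplication. The extra remarks on compatibility with composition and with the idempotent decomposition are correct but not needed for the statement itself.
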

\begin{proof}
Let $A:M\to M$ be a bicomplex linear operator
and let $\oa\ket{m_l}\fa$ be a basis of $M$.
Let $\ket{\psi}$ be in $M$ and let $\ket{\psi '}:=A\ket{\psi}$.

Since $\oa\ket{m_l}\fa$ is a basis of $M$, the
ket $A\ket{m_l}$ can be represented as a linear
combination of the $\ket{m_p}$:
\begin{equation*}
A\ket{m_l} = \sum_{p=1}^n A_{pl} \ket{m_p} .
\end{equation*}
Writing $\ket{\psi}$ as in~\eqref{2.22} and making
use of~\eqref{2.27}, we get
\begin{align*}
\ket{\psi '} &= A\ket{\psi}
= \sum_{l=1}^n w_l A \ket{m_l}
= \sum_{l=1}^n w_l \sum_{p=1}^n A_{pl} \ket{m_p} \\
&= \sum_{p=1}^n \oa \sum_{l=1}^n A_{pl} w_l \fa \ket{m_p} .
\end{align*}
Writing $\ket{\psi '} = \sum_{p=1}^n w_p' \ket{m_p}$
and making use of the linear independence of the
$\ket{m_p}$, we obtain
\begin{equation*}
w_p' = \sum_{l=1}^n A_{pl} w_l .
\end{equation*}
The action of $A$ on $\ket{\psi}$ is thus completely
determined by the matrix whose elements are the
bicomplex numbers $A_{pl}$.
\end{proof}
%%%%%

Clearly, the matrix associated with a linear
operator depends on the basis in which kets are
expressed.  Given a specific basis, however,
it is not difficult to show that the matrix associated
with the operator $A\circ B$ is the product of the
matrices associated with $A$ and $B$.

Let two bases $\ket{m_l}$ and $\ket{s_l}$ be related by
$\ket{m_l} = \sum_{p=1}^n L_{pl} \ket{s_p}$.  Let the
linear operator $A$ be represented by the matrix $A_{pl}$
in $\ket{m_l}$ and by the matrix $\tilde{A}_{pl}$ in
$\ket{s_l}$.  Then one can show that
\begin{equation}
A_{ji} = \sum_{p, l = 1}^n (L^{-1})_{jp}
\tilde{A}_{pl} L_{li} .
\label{3.9}
\end{equation}
Finally, it is not difficult to show that if
$A_\hh = 0$, then $A_{pl\hh} = 0$ for all $p$ and $l$,
in every basis.

%%%%%%%%%%%%%%%%%%%%%%%%%%%%%%%%%%%%%%%%%
%%%%%%%%%%%%%%%%%%%%%%%%%%%%%%%%%%%%%%%%%
%\newpage
\section{Bicomplex Hilbert Spaces}
\subsection{Scalar Product}

The bicomplex scalar product was defined in~\cite{Rochon3}
where, as in this paper, the physicists' convention is used
for the order of elements in the product.

%%%%%
\begin{defn}
Let $M$ be a finite-dimensional free $\mathbb{T}$-module.
Suppose that with each pair $\ket{\psi}$ and $\ket{\phi}$ in $M$,
taken in this order, we associate a bicomplex number
$\(\ket{\psi},\ket{\phi}\)$ which, $\forall \ket{\chi} \in M$
and $\forall \alpha\in\mathbb{T}$, satisfies
\begin{enumerate}
\item $(\ket{\psi},\ket{\phi}+\ket{\chi})
= (\ket{\psi},\ket{\phi})+(\ket{\psi},\ket{\chi})$;
\item $(\ket{\psi},\alpha \ket{\phi})
=\alpha (\ket{\psi},\ket{\phi})$;
\item $(\ket{\psi},\ket{\phi})
= (\ket{\phi},\ket{\psi})^{\dagger_{3}}$;
\item $(\ket{\psi},\ket{\psi})=0$
if and only if $\ket{\psi}=0$ .
\end{enumerate}
Then we say that $\(\ket{\psi},\ket{\phi}\)$ is a
\emph{bicomplex scalar product}.
\label{scalar}
\label{Definition4.1}
\end{defn}
%%%%%

Property~3 implies that
$\(\ket{\psi},\ket{\psi}\)\in\mathbb{D}$.
Definition~\ref{scalar} is very general.  In this paper
we shall be a little more restrictive, by requiring
the bicomplex scalar product to be hyperbolic
positive, that is,
\begin{equation}
(\ket{\psi},\ket{\psi})\in\mathbb{D}^{+}, \qquad
\forall\ket{\psi}\in M.
\label{hyperpositive}
\end{equation}
This may be a more natural generalization of the
scalar product on complex vector spaces, where
$(\ket{\psi}, \ket{\psi})$ is never negative.

\begin{defn}
Let $\{ \ket{m_l} \}$ be a basis of $M$ and let $V$ be
the associated vector space.  We say that a scalar product
is $\mC(\bo)$-\emph{closed} under $V$ if,
$\forall \ket{\psi},\ket{\phi}\in V$, we have
$(\ket{\psi},\ket{\phi})\in\mC(\bo)$.
\label{Definition4.1a}
\end{defn}
%%%%%

We note that the property of being $\mC(\bo)$-closed is
basis-dependent.  That is, a scalar product may be
$\mC(\bo)$-closed under~$V$ defined through a basis
$\{ \ket{m_l} \}$, but not under~$V'$ defined
through a basis $\{ \ket{s_l} \}$.  However, one
can show that for $k = 1, 2$, the following projection
of a bicomplex scalar product:
\begin{equation}
(\cdot,\cdot)_{\widehat{k}}
:=P_k((\cdot,\cdot)): M\times M\longrightarrow \mC(\bo)
\end{equation}
is a \textbf{standard scalar product} on $V_k$ as well
as on $V$.

%%%%%
\begin{thm}
Let $\ket{\psi},\ket{\phi}\in M$, then
\begin{equation}
(\ket{\psi},\ket{\phi})
=\eo(\ket{\psi}_{\widehat{1}},\ket{\phi}_{\widehat{1}})_{\widehat{1}}
+\et(\ket{\psi}_{\widehat{2}},
\ket{\phi}_{\widehat{2}})_{\widehat{2}} \, .
\label{decompEq}
\end{equation}
\label{decomp2}
\label{Theo4.1}
\end{thm}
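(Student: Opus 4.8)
The plan is to collapse the bicomplex scalar product of $\ket{\psi}$ and $\ket{\phi}$ onto the two complex scalar products of their idempotent projections, using only the axioms of Definition~\ref{Definition4.1} and the algebra of $\ee$ and $\eee$. First I would record the linearity forced in the \emph{first} argument, since the axioms state it directly only for the second. Combining property~3 with property~2 and the multiplicativity $(s\cdot t)^{\dagger_3}=s^{\dagger_3}t^{\dagger_3}$ from~\eqref{2.6}, one obtains $(\alpha\ket{\psi},\ket{\phi})=\alpha^{\dagger_3}(\ket{\psi},\ket{\phi})$ for every $\alpha\in\T$; likewise property~1 together with the additivity of $\dagger_3$ gives additivity in the first slot. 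Hence the product is additive in both arguments, $\T$-linear in the second, and $\dagger_3$-conjugate-linear in the first.

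Next I would substitute the canonical decompositions $\ket{\psi}=\ee\ket{\psi}_{\hh}+\eee\ket{\psi}_{\hhh}$ and $\ket{\phi}=\ee\ket{\phi}_{\hh}+\eee\ket{\phi}_{\hhh}$ from~\eqref{2.23} and expand using the linearity just recorded. Each of the four resulting terms reads $\e{j}^{\dagger_3}\e{k}\,(\ket{\psi}_{\h{j}},\ket{\phi}_{\h{k}})$. Two identities from~\eqref{2.11} finish the reduction: $\e{k}^{\dagger_3}=\e{k}$ strips the conjugation off the pulled-out factor, and $\ee\eee=0$ with $\e{k}^2=\e{k}$ annihilates the off-diagonal terms, leaving $(\ket{\psi},\ket{\phi})=\ee(\ket{\psi}_{\hh},\ket{\phi}_{\hh})+\eee(\ket{\psi}_{\hhh},\ket{\phi}_{\hhh})$.

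The last step is to turn the full bicomplex products of the projections into their $\widehat{k}$-projected counterparts. The elementary fact I would invoke is the absorption identity $\e{k}\,w=\e{k}\,\P{k}{w}$, valid for all $w\in\T$ and immediate from $w=\P{1}{w}\ee+\P{2}{w}\eee$ together with $\e{k}^2=\e{k}$ and $\ee\eee=0$. Applying it to each diagonal term and using the definition $(\cdot,\cdot)_{\h{k}}:=\P{k}{(\cdot,\cdot)}$ replaces $\ee(\ket{\psi}_{\hh},\ket{\phi}_{\hh})$ by $\ee(\ket{\psi}_{\hh},\ket{\phi}_{\hh})_{\hh}$, and similarly for the $\eee$ term, which is precisely~\eqref{decompEq}.

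I expect no deep obstacle; once the first-argument linearity is pinned down the argument is pure bookkeeping in the idempotent algebra. The single point requiring genuine care is that very first step: one must resist treating the product as linear (rather than $\dagger_3$-conjugate-linear) in the first slot, and notice that the stray conjugation is harmless only because the idempotents are $\dagger_3$-self-conjugate. A lesser subtlety is to keep the bicomplex number $(\ket{\psi}_{\hh},\ket{\phi}_{\hh})$ distinct from its projection $(\ket{\psi}_{\hh},\ket{\phi}_{\hh})_{\hh}\in\C(\ii)$, passing between them only through the absorption identity.
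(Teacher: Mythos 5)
Your proof is correct and follows essentially the same route as the paper's: decompose both kets in the idempotent basis, kill the cross terms, and then project each diagonal term. The only difference is that you derive the intermediate identity $(\ket{\psi},\ket{\phi})=\ee(\ket{\psi}_{\hh},\ket{\phi}_{\hh})+\eee(\ket{\psi}_{\hhh},\ket{\phi}_{\hhh})$ directly from the axioms (correctly handling the $\dagger_3$-conjugate-linearity in the first slot and using $\e{k}^{\dagger_3}=\e{k}$), whereas the paper simply cites it as Theorem~4 of~\cite{Rochon3}, so your argument is in fact more self-contained.
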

%

%\begin{proof}
\vspace{-2ex}\noindent\emph{Proof.}
Using Theorem~4 of~\cite{Rochon3}, we have
\begin{align*}
\scalarmath{\ket{\psi}}{\ket{\phi}}
&= (\eo \ket{\psi}_{\widehat{1}}+\et \ket{\psi}_{\widehat{2}},
\eo \ket{\phi}_{\widehat{1}}+\et \ket{\phi}_{\widehat{2}}) \\
&= \ee\scalarmath{\ket{\psi}_\hh}{\ket{\phi}_\hh}
+ \eee\scalarmath{\ket{\psi}_\hhh}{\ket{\phi}_\hhh}\\
&= \ee\oa \ee P_1 \! \scalarmath{(\ket{\psi}_\hh}{\ket{\phi}_\hh)}
+ \eee P_2 \! \scalarmath{(\ket{\psi}_\hh}{\ket{\phi}_\hh)} \fa\\
& \qquad + \eee\oa \ee P_1 \! \scalarmath{(\ket{\psi}_\hhh}{\ket{\phi}_\hhh)}
+ \eee P_2 \! \scalarmath{(\ket{\psi}_\hhh}{\ket{\phi}_\hhh)} \fa\\
&= \ee P_1 \! \scalarmath{(\ket{\psi}_\hh}{\ket{\phi}_\hh)}
+ \eee P_2 \! \scalarmath{(\ket{\psi}_\hhh}{\ket{\phi}_\hhh)}\\
&= \ee\scalarmath{\ket{\psi}_\hh}{\ket{\phi}_\hh}_\hh
+ \eee\scalarmath{\ket{\psi}_\hhh}{\ket{\phi}_\hhh}_\hhh .
\tag*{\qed}
\end{align*}
%\end{proof}
%%%%%

\medskip
Theorem~\ref{Theo4.1} is true whether the bicomplex scalar product
is $\mC(\bo)$-closed under~$V$ or not.  When it is $\mC(\bo)$-closed,
we have for $k = 1, 2$
\begin{equation}
\scalarmath{\ket{\psi}}{\ket{\phi}}_{\widehat{k}}
= \P{k}{\scalarmath{\ket{\psi}}{\ket{\phi}}}
= \scalarmath{\ket{\psi}}{\ket{\phi}} ,
\qquad \forall \ket{\psi},\ket{\phi}\in V .
\label{4.5}
\end{equation}

\begin{cor}
A ket $\ket{\psi}$ is in the null cone if and only if
$(\ket{\psi} ,\ket{\psi})$ is in the null none.
\label{Th4c}
\end{cor}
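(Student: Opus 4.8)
The plan is to reduce the corollary to the ordinary positive-definiteness of the projected scalar products $(\cdot,\cdot)_{\h{k}}$ on the complex spaces $V_k$, which was recorded just before Theorem~\ref{Theo4.1}. First I would specialize the decomposition of Theorem~\ref{Theo4.1} to the diagonal case $\ket{\phi}=\ket{\psi}$, obtaining
\begin{equation*}
(\ket{\psi},\ket{\psi})
= \eo (\ket{\psi}_{\h{1}},\ket{\psi}_{\h{1}})_{\h{1}}
+ \et (\ket{\psi}_{\h{2}},\ket{\psi}_{\h{2}})_{\h{2}} .
\end{equation*}
By the uniqueness of the idempotent decomposition, the two $\C(\ii)$-components of the bicomplex number $(\ket{\psi},\ket{\psi})$ are precisely $P_k\big((\ket{\psi},\ket{\psi})\big) = (\ket{\psi}_{\h{k}},\ket{\psi}_{\h{k}})_{\h{k}}$ for $k=1,2$.

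Next I would invoke the fact that each $(\cdot,\cdot)_{\h{k}}$ is a standard scalar product on $V_k$, hence positive-definite: for $k=1,2$ one has $(\ket{\psi}_{\h{k}},\ket{\psi}_{\h{k}})_{\h{k}}=0$ if and only if $\ket{\psi}_{\h{k}}=0$. Combining this with the previous display, the $\eo$-component of $(\ket{\psi},\ket{\psi})$ vanishes exactly when $\ket{\psi}_{\h{1}}=0$, and likewise the $\et$-component vanishes exactly when $\ket{\psi}_{\h{2}}=0$.

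Finally I would translate both sides into the language of the null cone. By definition a bicomplex number lies in $\nc$ iff one of its two idempotent components is zero, so $(\ket{\psi},\ket{\psi})\in\nc$ iff $\ket{\psi}_{\h{1}}=0$ or $\ket{\psi}_{\h{2}}=0$; by Definition~\ref{D2.1c} this last condition is exactly the statement that $\ket{\psi}\in\nc$, which closes the equivalence in both directions. The only substantive point — and the step I would treat most carefully — is the definiteness of $(\cdot,\cdot)_{\h{k}}$: everything else is bookkeeping with the idempotent decomposition, but it is precisely the positive-definiteness of the projected products that upgrades ``a component of $(\ket{\psi},\ket{\psi})$ vanishes'' to ``the corresponding projected ket vanishes.''
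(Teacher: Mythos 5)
Your proof is correct and follows essentially the same route as the paper's: both specialize Theorem~\ref{Theo4.1} to $\ket{\phi}=\ket{\psi}$ and read the null-cone condition off the idempotent components of $(\ket{\psi},\ket{\psi})$. If anything, your explicit appeal to the positive-definiteness of $(\cdot,\cdot)_{\widehat{k}}$ handles the direction $(\ket{\psi},\ket{\psi})\in\nc \Rightarrow \ket{\psi}\in\nc$ more directly than the paper's ``conversely'' paragraph, which only argues by contraposition of the forward implication.
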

\begin{proof}
By Theorem~\ref{Theo4.1} we have
\begin{equation}
\scalarmath{\ket{\psi}}{\ket{\psi}}
= \ee\scalarmath{\ket{\psi}_\hh}{\ket{\psi}_\hh}_\hh
+ \eee\scalarmath{\ket{\psi}_\hhh}{\ket{\psi}_\hhh}_\hhh .
\label{Eq4d}
\end{equation}
If $\ket{\psi}$ is in the null cone, then
$\ket{\psi}_\h{k} = 0$ for $k = 1$ or~2. By~\eqref{Eq4d}
then, $\e{k} \scalarmath{\ket{\psi}}{\ket{\psi}} = 0$.

Conversely, if $\scalarmath{\ket{\psi}}{\ket{\psi}}$
is not in the null cone, then by~\eqref{Eq4d}
\begin{equation*}
\scalarmath{\ket{\psi}_\h{k}} {\ket{\psi}_\h{k}}_\h{k} \neq 0,
\qquad k = 1, 2.
\end{equation*}
But then $\scalarmath{\ket{\psi}_\h{k}} {\ket{\psi}_\h{k}} \neq 0$,
and therefore ${\ket{\psi}_\h{k}} \neq 0$ ($k = 1, 2$).
\end{proof}

%%%%%%%%%%%%%%%%%%%%%%%%%%%%%%%%%%%%%%%%%%%%%%%%%%%%%%%%%%%%%%%%%%%%%%%%%%%%%%%%%%
\subsection{Hilbert Spaces}

%%%%%
\begin{thm}
Let $M$ be a finite-dimensional free $\mathbb{T}$-module,
let $\left\{\ket{m_l}\right\}$ be a basis of $M$ and let
$V$ be the vector space associated with $\left\{\ket{m_l}\right\}$
through~\eqref{2p}.  Then for $k = 1, 2$,
$(V, \scalarmath{\cdot}{\cdot}_{\widehat{k}})$ and
$(V_k, \scalarmath{\cdot}{\cdot}_{\widehat{k}})$
are complex $(\mC(\bo))$ pre-Hilbert spaces.
\label{Pre}
\label{Theo4.2}
\end{thm}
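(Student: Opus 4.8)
The plan is to verify directly that the projected form $\scalarmath{\cdot}{\cdot}_{\widehat{k}} := P_k(\scalarmath{\cdot}{\cdot})$, which takes values in $\mC(\bo)$, is a standard (physicists' convention) complex inner product on each of $V$ and $V_k$. By the discussion following Definition~\ref{Definition3.1}, $V$ and $V_k$ are $n$-dimensional vector spaces over $\mC(\bo)$; since a complex pre-Hilbert space is precisely a complex vector space equipped with an inner product, it suffices to check that $\scalarmath{\cdot}{\cdot}_{\widehat{k}}$ is additive and $\mC(\bo)$-homogeneous in its second argument, conjugate-symmetric, and positive-definite. Two elementary properties of the projectors drive the formal part: $P_k$ is a ring homomorphism $\mathbb{T}\to\mC(\bo)$ that fixes $\mC(\bo)$ pointwise (since $\alpha=\alpha\ee+\alpha\eee$ gives $P_k(\alpha)=\alpha$ for $\alpha\in\mC(\bo)$), and it intertwines the $\dagger_3$-conjugation with ordinary complex conjugation, $P_k(w^{\dagger_3})=\overline{P_k(w)}$, which is immediate from $w^{\dagger_3}=\bar z_{\widehat{1}}\ee+\bar z_{\widehat{2}}\eee$.

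Granting these, the algebraic axioms follow by applying $P_k$ to the defining properties of Definition~\ref{Definition4.1}. Additivity in the second slot and $\mC(\bo)$-homogeneity come from properties~1 and~2 together with the additivity and multiplicativity of $P_k$ and the fact that $P_k$ fixes complex scalars; conjugate symmetry $\scalarmath{\ket{\psi}}{\ket{\phi}}_{\widehat{k}}=\overline{\scalarmath{\ket{\phi}}{\ket{\psi}}_{\widehat{k}}}$ comes from property~3 and the intertwining relation. For non-negativity I would invoke hyperbolic positivity~\eqref{hyperpositive}: writing $\scalarmath{\ket{\psi}}{\ket{\psi}}=x_{\widehat{1}}\ee+x_{\widehat{2}}\eee\in\mathbb{D}^{+}$ with $x_{\widehat{1}},x_{\widehat{2}}\geq 0$ real gives $\scalarmath{\ket{\psi}}{\ket{\psi}}_{\widehat{k}}=x_{\widehat{k}}\geq 0$.

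The main obstacle is definiteness, and this is where the module structure really enters. For $\ket{\psi}\in V$ I would first note that, since $P_j(\ket{m_l})=\ket{m_l}$ and $P_j$ acts as the identity on the complex coefficients, every element $\ket{\psi}=\sum_l x_l\ket{m_l}$ of $V$ satisfies $\ket{\psi}_{\widehat{1}}=\ket{\psi}_{\widehat{2}}=\ket{\psi}$; hence a nonzero $\ket{\psi}\in V$ lies outside the null cone, so by Corollary~\ref{Th4c} the number $\scalarmath{\ket{\psi}}{\ket{\psi}}$ lies outside the null cone as well, forcing both $P_1(\scalarmath{\ket{\psi}}{\ket{\psi}})$ and $P_2(\scalarmath{\ket{\psi}}{\ket{\psi}})$ to be nonzero. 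For $\ket{\psi}\in V_k$ I would write $\ket{\psi}=\mathbf{e_k}\ket{\chi}$ with $\ket{\chi}\in V$, so that $\ket{\psi}_{\widehat{k}}=\ket{\chi}$ while the complementary projection vanishes; the decomposition~\eqref{Eq4d} then gives $\scalarmath{\ket{\psi}}{\ket{\psi}}_{\widehat{k}}=\scalarmath{\ket{\chi}}{\ket{\chi}}_{\widehat{k}}$, which is nonzero whenever $\ket{\chi}\neq 0$ by the case already treated. This reduces definiteness on $V_k$ to definiteness on $V$ and completes the verification; the only real difficulty is seeing that nonzero elements of $V$ avoid the null cone, after which Corollary~\ref{Th4c} does the work.
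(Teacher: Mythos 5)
Your proof is correct and follows essentially the same route as the paper: the paper's own one-line proof simply invokes the earlier (unproved) remark that $(\cdot,\cdot)_{\widehat{k}}=P_k((\cdot,\cdot))$ is a standard scalar product on $V$ and on $V_k$, which is exactly the claim you verify in detail via the homomorphism and conjugation-intertwining properties of $P_k$. Your observation that a nonzero $\ket{\psi}\in V$ satisfies $\ket{\psi}_{\widehat{1}}=\ket{\psi}_{\widehat{2}}=\ket{\psi}$ and hence lies outside the null cone, so that Corollary~\ref{Th4c} (together with axiom~4 of Definition~\ref{Definition4.1}) yields definiteness, is precisely the step the paper leaves implicit, and you handle it correctly.
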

\begin{proof}
Since $(\cdot,\cdot)_{\widehat{k}}$ is a standard scalar product
when the vector space $M'$ of Theorem~\ref{Theo3.3} is restricted
to $V$ or $V_k$, then
$(V, \scalarmath{\cdot}{\cdot}_{\widehat{k}})$ and
$(V_k, \scalarmath{\cdot}{\cdot}_{\widehat{k}})$ are complex
$(\mC(\bo))$ pre-Hilbert spaces.
\end{proof}
%%%%%

%%%%%
\begin{cor}
$(V, \scalarmath{\cdot}{\cdot}_{\widehat{k}})$ and
$(V_k, \scalarmath{\cdot}{\cdot}_{\widehat{k}})$ are
complex $(\mC(\bo))$ Hilbert spaces.
\label{Hilbert}
\label{Corollary4.1}
\end{cor}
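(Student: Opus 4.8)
The plan is to observe that Theorem~\ref{Theo4.2} already supplies everything except completeness: it establishes that $(V, \scalarmath{\cdot}{\cdot}_{\h{k}})$ and $(V_k, \scalarmath{\cdot}{\cdot}_{\h{k}})$ are complex $(\mC(\bo))$ pre-Hilbert spaces, so the inner-product axioms hold and each carries the norm $\|\ket{\psi}\| := \sqrt{\scalarmath{\ket{\psi}}{\ket{\psi}}_{\h{k}}}$. Upgrading a pre-Hilbert space to a Hilbert space requires only completeness with respect to this norm, so that is the single remaining point to verify.

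The key observation is that $V$ and $V_k$ are \emph{finite-dimensional} complex vector spaces. Indeed, as noted after Definition~\ref{Definition3.1}, $V_k$ is an $n$-dimensional vector space over $\mC(\bo)$ isomorphic to $V$, and $V$ is itself $n$-dimensional. Since $\mC(\bo)$ is isomorphic to the field $\mC$ of complex numbers, each of these spaces is a complex inner-product space of finite dimension $n$. I would then invoke the standard fact that every finite-dimensional inner-product space over $\mC$ (or $\mR$) is automatically complete: choosing an orthonormal basis (available via Gram--Schmidt, which applies in any pre-Hilbert space), one obtains a linear isometry onto $\mC^n$ with its standard Hermitian inner product, and completeness of $\mC^n$ then transfers back. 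Concretely, a Cauchy sequence in $V$ (or $V_k$) maps to a Cauchy sequence in $\mC^n$, which converges, and pulling the limit back through the isometry yields a limit in the original space. Hence both spaces are complete and therefore complex $(\mC(\bo))$ Hilbert spaces.

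The main ``obstacle'' here is really only to recognize that there is nothing deep left to prove: the entire content beyond Theorem~\ref{Theo4.2} is completeness, and in finite dimensions completeness is automatic. The one mild point worth checking is that the finite-dimensionality established for $V$ and $V_k$ as $\mC(\bo)$-vector spaces is exactly the dimension relevant to the finite-dimensional completeness theorem; but this is immediate from the field isomorphism $\mC(\bo) \cong \mC$, under which an $\mC(\bo)$-orthonormal basis of $V$ (or $V_k$) becomes an ordinary complex orthonormal basis. Thus the corollary follows directly from Theorem~\ref{Theo4.2} together with the finite-dimensionality recorded in Section~3.
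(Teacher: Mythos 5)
Your proposal is correct and follows essentially the same route as the paper: both reduce the corollary to the completeness of a finite-dimensional normed (pre-Hilbert) space over $\mC(\bo)$, which the paper simply cites and you spell out via an orthonormal-basis isometry onto $\mC^n$. No gap; the extra detail you supply is just an unpacking of the standard fact the paper invokes.
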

\begin{proof}
Theorem~\ref{Pre} implies that pre-Hilbert spaces
$(V, \scalarmath{\cdot}{\cdot}_{\widehat{k}})$
and $(V_k, \scalarmath{\cdot}{\cdot}_{\widehat{k}})$
are finite-dimensional normed spaces over $\mC(\bo)$.
Therefore they are also complete metric spaces~\cite{Herget}.
Hence $V$ and $V_k$ are complex $(\mC(\bo))$ Hilbert spaces.
\end{proof}
%%%%%

Let $\ket{\psi_k}$ and $\ket{\phi_k}$ be in $V_k$ for
$k = 1, 2$.  On the direct sum of the two Hilbert spaces
$V_1$ and $V_2$, one can define a scalar product as follows:
\begin{equation}
( \ket{\psi_1} \oplus \ket{\psi_2},
\ket{\phi_1} \oplus \ket{\phi_2} )
= ( \ket{\psi_1}, \ket{\phi_1})_{\widehat{1}}
+ ( \ket{\psi_2}, \ket{\phi_2})_{\widehat{2}} \, .
\label{4.6}
\end{equation}
Then $M'=V_1\oplus V_2$ is a Hilbert space~\cite{Conway}.

From a set-theoretical point of view, $M$ and $M'$ are
identical.  In this sense we can say, perhaps improperly,
that the \textbf{module} $M$ can be decomposed into the
direct sum of two classical Hilbert spaces, i.e.
$M=V_1\oplus V_2.$  Now let us consider the following
\textbf{norm} on the vector space $M'$:
\begin{equation}
\big{|}\big{|}\ket{\phi}\big{|}\big{|}
:= \frac{1}{\sqrt{2}}\sqrt{\scalarmath{\ket{\phi}_{\widehat{1}}}
{\ket{\phi}_{\widehat{1}}}_{\widehat{1}}
+ \scalarmath{\ket{\phi}_{\widehat{2}}}
{\ket{\phi}_{\widehat{2}}}_{\widehat{2}}} \; .
\end{equation}
Making use of this norm, we can define a metric on $M$:
\begin{equation}
d(\ket{\phi},\ket{\psi})
=\big{|}\big{|}\ket{\phi}-\ket{\psi}\big{|}\big{|}.
\end{equation}
With this metric, $M$ is \textbf{complete} and therefore
a \textbf{bicomplex Hilbert space}.

We note that a bicomplex scalar product is
\textbf{completely characterized} by the two scalar products
$\scalarmath{\cdot}{\cdot}_{\widehat{k}}$ on $V$.
In fact if $\scalarmath{\cdot}{\cdot}_{\widehat{1}}$ and
$\scalarmath{\cdot}{\cdot}_{\widehat{2}}$ are two arbitrary
scalar products on $V$, then
$\scalarmath{\cdot}{\cdot}$ defined in~\eqref{decompEq}
is a bicomplex scalar product on $M$.

As a direct application of this decomposition, we obtain the following important result.

%%%%%
\begin{thm}
Let $f:M\rightarrow \mathbb{T}$ be a linear functional on $M$.
Then there is a unique $\ket{\psi}\in M$ such that $\forall \ket{\phi}$,
$f(\ket{\phi})=\scalarmath{\ket{\psi}}{\ket{\phi}}$.
\label{Theo lin func}
\end{thm}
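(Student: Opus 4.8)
The plan is to reduce this statement to two applications of the classical finite-dimensional Riesz representation theorem, one on each of the complex Hilbert spaces $(V,\scalarmath{\cdot}{\cdot}_\hh)$ and $(V,\scalarmath{\cdot}{\cdot}_\hhh)$ produced by Corollary~\ref{Hilbert}, and then to glue the two representing vectors together with the idempotent basis. The scalar-product decomposition~\eqref{decompEq} of Theorem~\ref{Theo4.1} is exactly what makes the two halves fit back together into a single bicomplex identity.

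First I would build the two component functionals. For $\ket{\chi}\in V$ and $k=1,2$, set $g_k(\ket{\chi}):=\P{k}{f(\ket{\chi})}$, which lies in $\mC(\bo)$ because $P_k$ maps $\T$ into $\mC(\bo)$. Using the bicomplex linearity of $f$, the multiplicativity $\P{k}{st}=\P{k}{s}\P{k}{t}$ of~\eqref{2.17}, and the elementary fact that $\P{k}{a}=a$ for every $a\in\mC(\bo)$ (immediate from~\eqref{2.12a}), one checks that each $g_k$ is a $\mC(\bo)$-linear functional on $V$. Since $V$ is finite-dimensional, $g_k$ is automatically continuous, so the classical Riesz theorem furnishes unique vectors $\ket{\psi}_\hh,\ket{\psi}_\hhh\in V$ with $g_k(\ket{\chi})=\scalarmath{\ket{\psi}_\h{k}}{\ket{\chi}}_\h{k}$ for all $\ket{\chi}\in V$. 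I would then define $\ket{\psi}:=\eo\ket{\psi}_\hh+\et\ket{\psi}_\hhh\in M$, noting that by uniqueness of the idempotent decomposition its components are precisely the two vectors just constructed.

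To verify that $\ket{\psi}$ represents $f$, I would take an arbitrary $\ket{\phi}=\eo\ket{\phi}_\hh+\et\ket{\phi}_\hhh$ and apply bicomplex linearity to get $f(\ket{\phi})=\eo f(\ket{\phi}_\hh)+\et f(\ket{\phi}_\hhh)$. Projecting with $P_k$ and using $\P{1}{\eo}=1$, $\P{1}{\et}=0$ (and symmetrically for $P_2$) collapses this to $\P{k}{f(\ket{\phi})}=g_k(\ket{\phi}_\h{k})=\scalarmath{\ket{\psi}_\h{k}}{\ket{\phi}_\h{k}}_\h{k}$; recombining the two projections and invoking~\eqref{decompEq} then yields exactly $f(\ket{\phi})=\scalarmath{\ket{\psi}}{\ket{\phi}}$. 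For uniqueness, if $\ket{\psi}$ and $\ket{\psi}'$ both represent $f$, then $\scalarmath{\ket{\psi}-\ket{\psi}'}{\ket{\phi}}=0$ for every $\ket{\phi}$; choosing $\ket{\phi}=\ket{\psi}-\ket{\psi}'$ and using property~4 of Definition~\ref{scalar} forces $\ket{\psi}=\ket{\psi}'$. I expect no analytic obstacle, since finite-dimensionality makes every component functional continuous and classical Riesz applies verbatim; the only real care is the bookkeeping of confirming that the projected functionals take values in $\mC(\bo)$ and are $\mC(\bo)$-linear, and then matching the two projected identities back to the single bicomplex identity through Theorem~\ref{Theo4.1}.
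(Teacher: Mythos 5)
Your proposal is correct and follows essentially the same route as the paper: project the functional to $P_k(f)$ on $V$, apply the classical Riesz representation theorem on the complex Hilbert space $(V,\scalarmath{\cdot}{\cdot}_{\widehat{k}})$ for $k=1,2$, glue the two representing vectors with $\eo$ and $\et$, and recombine via Theorem~\ref{decomp2}. You merely spell out some details the paper leaves implicit, such as the $\mC(\bo)$-linearity of the projected functionals and the uniqueness of the global $\ket{\psi}$.
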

\begin{proof}
We make use of the analogue theorem on $V$~\cite[p.~215]{Herget},
with the functional $P_k(f)$ restricted to $V$.
The theorem shows that for each $k = 1, 2$,
there is a unique $\ket{\psi_k}\in V$
such that
\begin{equation*}
P_k (f) ({\ket{\phi}}_{\widehat{k}})
= \scalarmath{\ket{\psi_k}}
{{\ket{\phi}}_{\widehat{k}}}_{\widehat{k}} .
\end{equation*}
Making use of Theorem~\ref{decomp2}, we find that
$\ket{\psi}:=\eo\ket{\psi_1} + \et\ket{\psi_2}$
has the desired properties.
\end{proof}
%%%%%

%%%%%%%%%%%%%%%%%%%%%%%%%%%%%%%%%%%%%%%%%%%%%%%%%%%%%%%%%%%%%%%%%%%%%%%%%%%%%%%%%%
\subsection{Orthogonalization}

Just like in vector spaces, a basis in $M$ can
always be orthogonalized.

%%%%%
\begin{thm}
Let $M$ be a finite-dimensional free $\T$-module and
let $\{ \ket{s_l} \}$ be an arbitrary basis of $M$.
Then one can always find bicomplex linear combinations
of the $\ket{s_l}$ which make up an orthogonal
basis.
\label{Theo3.3.1}
\end{thm}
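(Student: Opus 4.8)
The plan is to reduce the bicomplex orthogonalization to two independent classical Gram--Schmidt procedures via the idempotent decomposition, and then reassemble the results. The engine is Theorem~\ref{Theo4.1}, which shows that the bicomplex scalar product splits as $\scalarmath{\ket{\psi}}{\ket{\phi}} = \ee\scalarmath{\ket{\psi}_\hh}{\ket{\phi}_\hh}_\hh + \eee\scalarmath{\ket{\psi}_\hhh}{\ket{\phi}_\hhh}_\hhh$, so that orthogonality of two kets in $M$ is equivalent to the simultaneous vanishing of two genuine complex inner products on $V$. As recalled just before Theorem~\ref{Theo4.1}, each $\scalarmath{\cdot}{\cdot}_\h{k}$ is a standard scalar product on $V$, and by Corollary~\ref{Corollary4.1} the pair $(V,\scalarmath{\cdot}{\cdot}_\h{k})$ is a finite-dimensional complex $(\C(\ii))$ Hilbert space. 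Hence the ordinary Gram--Schmidt process is available in each factor.

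First I would project the given basis. By Theorem~\ref{Theo3.7}, both $\{ \P{1}{\ket{s_l}} \}$ and $\{ \P{2}{\ket{s_l}} \}$ are bases of $V$. Running classical Gram--Schmidt in $(V,\scalarmath{\cdot}{\cdot}_\hh)$ on $\{ \P{1}{\ket{s_l}} \}$ produces an orthogonal basis $\ket{a_l} = \sum_p \alpha_{lp}\P{1}{\ket{s_p}}$ of $V$ with $\alpha_{lp}\in\C(\ii)$, and Gram--Schmidt in $(V,\scalarmath{\cdot}{\cdot}_\hhh)$ on $\{ \P{2}{\ket{s_l}} \}$ produces $\ket{b_l} = \sum_p \beta_{lp}\P{2}{\ket{s_p}}$ with $\beta_{lp}\in\C(\ii)$. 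I would then recombine these into $\ket{t_l} := \ee\ket{a_l} + \eee\ket{b_l}$. The crucial computation is that $\ee\P{1}{\ket{s_p}} = \ee\ket{s_p}$ and $\eee\P{2}{\ket{s_p}} = \eee\ket{s_p}$, which follow from $\ee^2=\ee$, $\eee^2=\eee$, $\ee\eee=0$ together with the decomposition~\eqref{2.23}; consequently $\ket{t_l} = \sum_p (\alpha_{lp}\ee + \beta_{lp}\eee)\ket{s_p}$ is an honest bicomplex linear combination of the $\ket{s_p}$, as the statement demands. Moreover, by the uniqueness of the idempotent decomposition we have $\ket{t_l}_\hh = \ket{a_l}$ and $\ket{t_l}_\hhh = \ket{b_l}$, so Theorem~\ref{Theo4.1} yields $\scalarmath{\ket{t_l}}{\ket{t_m}} = \ee\scalarmath{\ket{a_l}}{\ket{a_m}}_\hh + \eee\scalarmath{\ket{b_l}}{\ket{b_m}}_\hhh = 0$ for $l\neq m$; the $\ket{t_l}$ are mutually orthogonal.

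It remains to confirm that $\{\ket{t_l}\}$ is genuinely a basis of $M$, which I expect to be the step needing the most care, since linear independence together with the correct cardinality does not by itself guarantee a basis in a module. For linear independence I would suppose $\sum_l c_l\ket{t_l}=0$ with $c_l\in\T$ and apply $P_1$ and $P_2$: using~\eqref{2.24} and the linear independence of the orthogonal bases $\{\ket{a_l}\}$ and $\{\ket{b_l}\}$ of $V$, one obtains $\P{1}{c_l}=0=\P{2}{c_l}$, whence $c_l=\ee\P{1}{c_l}+\eee\P{2}{c_l}=0$. For generation, given $\ket{\psi}\in M$ I would expand $\ket{\psi}_\hh$ and $\ket{\psi}_\hhh$ in the bases $\{\ket{a_l}\}$ and $\{\ket{b_l}\}$ of $V$ and reassemble through~\eqref{2.23} exactly as in the construction of the $\ket{t_l}$, obtaining $\ket{\psi}$ as a bicomplex combination of the $\ket{t_l}$. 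Together these establish that $\{\ket{t_l}\}$ is an orthogonal basis. As a consistency check, each $\ket{t_l}$ lies outside the null cone, since neither $\ket{t_l}_\hh=\ket{a_l}$ nor $\ket{t_l}_\hhh=\ket{b_l}$ vanishes, in agreement with Theorem~\ref{Theo3.6}.
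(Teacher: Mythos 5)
Your proposal is correct and follows essentially the same route as the paper: project onto the two idempotent components, run classical Gram--Schmidt in each complex Hilbert space, and recombine via $\ee\ket{a_l}+\eee\ket{b_l}$, using Theorem~\ref{decomp2} to verify orthogonality. You are somewhat more explicit than the paper in checking that the recombined set actually spans $M$ over $\T$ and is $\T$-linearly independent, which is a worthwhile addition but not a different method.
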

\begin{proof}
Making use of Theorem~\ref{Theo3.3} and Corollary~\ref{Hilbert},
we see that $M=V_1\oplus V_2$, with $V_k$ a complex
Hilbert space.  By Theorem~\ref{Theo3.7},
$\{ {\bf e_k}\ket{s_l}_{\widehat{k}} \}$
is a basis of $V_k$ ($k=1,2$).  Bases in vector spaces can
always be orthogonalized.  So let
$\{{\bf e_k}\ket{s'_l}_{\widehat{k}} \}$
be an orthogonal basis made up of linear combinations of the
${\bf e_k}\ket{s_l}_{\widehat{k}}$.
For all $l\in\{1\ldots n\}$ and for $p \neq l$, we see that
\begin{align*}
& \scalarmath{{\bf e_1}\ket{s'_l}_{\widehat{1}}
+ {\bf e_2}\ket{s'_l}_{\widehat{2}}\,}
{\,{\bf e_1}\ket{s'_l}_{\widehat{1}}
+ {\bf e_2}\ket{s'_l}_{\widehat{2}}} \\
& \qquad = \scalarmath{\eo \ket{s'_l}_{\widehat{1}}}
{\eo \ket{s'_l}_{\widehat{1}}}
+ \scalarmath{\et \ket{s'_l}_{\widehat{2}}}
{\et \ket{s'_l}_{\widehat{2}}}
\end{align*}
is not in the null cone, and that
\begin{align*}
& \scalarmath{{\bf e_1}\ket{s'_l}_{\widehat{1}}
+ {\bf e_2}\ket{s'_l}_{\widehat{2}}\,}
{\,{\bf e_1}\ket{s'_p}_{\widehat{1}}
+ {\bf e_2}\ket{s'_p}_{\widehat{2}}} \\
& \qquad = \scalarmath{\eo \ket{s'_l}_{\widehat{1}}}
{\eo \ket{s'_p}_{\widehat{1}}}
+ \scalarmath{\et \ket{s'_l}_{\widehat{2}}}
{\et \ket{s'_p}_{\widehat{2}}}
\end{align*}
vanishes.  This shows that the set
$\{ {\bf e_1}\ket{s'_l}_{\widehat{1}}
+ {\bf e_2}\ket{s'_l}_{\widehat{2}}\}$
is an orthogonal basis of $M$.
\end{proof}
%%%%%

It is interesting to see explicitly how the Gram-Schmidt
orthogonalization process can be applied.  Let $\oa \ket{m_l} \fa$
be a basis of $M$. We have shown in Theorem~\ref{Theo3.6} that
no $\ket{m_l}$, and therefore no $(\ket{m_l} , \ket{m_l})$,
can belong to the null cone.  Let $\ket{m'_1}=\ket{m_1}$
and let us define
\begin{equation*}
\ket{m'_2}=\ket{m_2}-\frac{\scalarmath{\ket{m'_1}}{\ket{m_2}}}
{\scalarmath{\ket{m'_1}}{\ket{m'_1}}}\ket{m'_1} .
\end{equation*}
Clearly, $\ket{m'_2}$ exists and
$\scalarmath{\ket{m'_1}}{\ket{m'_2}}=0$.
Moreover, $\scalarmath{\ket{m'_2}}{\ket{m'_2}}$ is not in the null
cone.  If it were, we would have for instance
(by Corollary~\ref{Th4c}) $\eee\ket{m'_2}=0$.
But then
\begin{align*}
0 &= \eee\( \ket{m_2}-\frac{\scalarmath{\ket{m'_1}}{\ket{m_2}}}
{\scalarmath{\ket{m'_1}}{\ket{m'_1}}}\ket{m'_1} \) \\
&= \eee\ket{m_2}+\( -\frac{\scalarmath{\ket{m'_1}}{\ket{m_2}}}
{\scalarmath{\ket{m'_1}}{\ket{m'_1}}}\eee \)\ket{m_1}.
\end{align*}
This is impossible, since $\ket{m_1}$ and $\ket{m_2}$ are linearly
independent.  We can verify that $\ket{m'_1}$ and $\ket{m'_2}$
are also linearly independent. Indeed let $w_1\ket{m'_1}+w_2\ket{m'_2}=0$.
Taking the scalar product of this equation with $\ket{m'_l}$ $(l=1,2)$,
we find that $w_l\scalarmath{\ket{m'_l}}{\ket{m'_l}}=0$.
Because $\ket{m'_l}$ is not in the null-cone, $w_l$ must vanish.

Now we can make an inductive argument to generate an
orthogonal basis. Suppose that we have $k$ linear combination
of $\ket{m_1},\dots,\ket{m_k}$, denoted $\ket{m'_1},\dots,\ket{m'_k}$,
that are mutually orthogonal, linearly independent and not
in the null cone. Let us define
\begin{equation*}
\ket{m'_{k+1}} = \ket{m_{k+1}}
- \frac{\scalarmath{\ket{m'_1}}{\ket{m_{k+1}}}}
{\scalarmath{\ket{m'_1}}{\ket{m'_1}}}\ket{m'_1}
- \dots - \frac{\scalarmath{\ket{m'_k}}{\ket{m_{k+1}}}}
{\scalarmath{\ket{m'_k}}{\ket{m'_k}}}\ket{m'_k}.
\end{equation*}
Clearly, $\ket{m'_{k+1}}$ exists. We now show that
$\ket{m'_1},\dots,\ket{m'_{k+1}}$ are (i) mutually orthogonal,
(ii) not in the null cone and (iii) linearly independent.

To prove (i), it is enough to note that
$\scalarmath{\ket{m'_l}}{\ket{m'_{k+1}}}=0$ for $1\leq l\leq k$.
To prove (ii), let's assume (for instance) that $\eee\ket{m'_{k+1}}=0$.
We then have
\begin{equation*}
0=\eee\ket{m_{k+1}}
- \frac{\eee\scalarmath{\ket{m'_1}}{\ket{m_{k+1}}}}
{\scalarmath{\ket{m'_1}}{\ket{m'_1}}}\ket{m'_1}
- \dots - \frac{\eee\scalarmath{\ket{m'_k}}{\ket{m_{k+1}}}}
{\scalarmath{\ket{m'_k}}{\ket{m'_k}}}\ket{m'_k}.
\end{equation*}
Because the $\ket{m'_l}$ $(l\leq k)$ are linear combinations
of the $\ket{m_l}$, this implies that
\begin{equation*}
0=\eee\ket{m_{k+1}} + \sum_{l=1}^k{w_l\ket{m_l}},
\end{equation*}
for some coefficients $w_l$ (possibly null).
But this equation is impossible because
$\ket{m_{k+1}}$ and $\ket{m_l}$ $(l\leq k)$
are linearly independent.

The proof of (iii), that the $\ket{m'_l}$ $(l\leq k+1)$ are linearly independent, can be carried out just like the one that
$\ket{m'_1}$ and $\ket{m'_2}$ are.  This completes the
orthogonalization process.
%%%%%

Going back to the end of Theorem~\ref{Theo3.3.1}, we can see that
any set like
\begin{equation}
\{ {\bf e_1}\ket{s'_{l_1}}_{\widehat{1}}
+ {\bf e_2}\ket{s'_{l_2}}_{\widehat{2}}\} ,
\label{equation-base2}
\end{equation}
with $l_1$ not always equal to $l_2$,
will give a new orthogonal basis of $M$.
Following this procedure, it is possible to construct
$n!$ different orthogonal bases of $M$.  Of course,
there are an infinite number of bases of~$M$.

The following theorem shows that an orthogonal
basis can always be orthonormalized.

%%%%%
\begin{thm}
Any ket $\ket{\psi}$ not in the null cone can
be normalized.
\label{Theo2.3}
\end{thm}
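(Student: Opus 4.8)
The plan is to exhibit an explicit bicomplex scalar $\lambda\in\T$ for which the rescaled ket $\lambda\ket{\psi}$ satisfies $(\lambda\ket{\psi},\lambda\ket{\psi})=1$; this is the natural meaning of normalization, since $1=\ee+\eee$. First I would strip the rescaling out of the scalar product using the axioms of Definition~\ref{Definition4.1}. Writing the first slot in antilinear form (property~3 combined with property~2, together with the fact from~\eqref{2.6} that $\dagger_3$ respects products) gives $(\lambda\ket{\psi},\lambda\ket{\psi})=\lambda^{\dagger_3}\lambda\,(\ket{\psi},\ket{\psi})$. Since $\lambda^{\dagger_3}\lambda=|\lambda|_\jj^2\in\D^+$, the entire computation can be carried out inside the hyperbolic numbers.

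Next I would pass to the idempotent basis. By Theorem~\ref{Theo4.1},
\begin{equation*}
(\ket{\psi},\ket{\psi})=\ee\,(\ket{\psi}_\hh,\ket{\psi}_\hh)_\hh+\eee\,(\ket{\psi}_\hhh,\ket{\psi}_\hhh)_\hhh=:a\,\ee+b\,\eee,
\end{equation*}
where $a$ and $b$ are the squared norms of $\ket{\psi}_\hh$ and $\ket{\psi}_\hhh$ in the complex Hilbert spaces of Corollary~\ref{Hilbert}, hence nonnegative reals (this is also exactly the content of hyperbolic positivity~\eqref{hyperpositive}). Because $\ket{\psi}\notin\nc$, Corollary~\ref{Th4c} forces $(\ket{\psi},\ket{\psi})\notin\nc$, so neither component vanishes; thus $a>0$ and $b>0$. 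Writing $\lambda=\lambda_\hh\ee+\lambda_\hhh\eee$ and using $w^{\dagger_3}=\bar z_\hh\ee+\bar z_\hhh\eee$ together with~\eqref{2.20}, one gets $\lambda^{\dagger_3}\lambda=|\lambda_\hh|^2\ee+|\lambda_\hhh|^2\eee$, so the normalization condition $\lambda^{\dagger_3}\lambda\,(\ket{\psi},\ket{\psi})=\ee+\eee$ decouples into the two real equations $|\lambda_\hh|^2a=1$ and $|\lambda_\hhh|^2b=1$.

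Since $a,b>0$ these solve immediately, for instance by the positive real roots $\lambda_\hh=a^{-1/2}$ and $\lambda_\hhh=b^{-1/2}$, i.e. $\lambda=a^{-1/2}\ee+b^{-1/2}\eee\in\D^+$, and then $\lambda\ket{\psi}$ is the required normalized ket. I expect the only genuine obstacle to be conceptual rather than computational: one must resist the temptation to divide $\ket{\psi}$ directly by a single ``length'' $\sqrt{(\ket{\psi},\ket{\psi})}$, since $(\ket{\psi},\ket{\psi})$ is a true hyperbolic number and such a naive square root is not well behaved on all of $\D$. The resolution is that hyperbolic positivity makes both idempotent components strictly positive reals, after which the idempotent decomposition reduces the task to two ordinary real normalizations, each of which can always be performed.
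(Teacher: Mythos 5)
Your proposal is correct and follows essentially the same route as the paper: decompose $(\ket{\psi},\ket{\psi})=a\ee+b\eee$, use hyperbolic positivity plus Corollary~\ref{Th4c} to get $a>0$ and $b>0$, and rescale by $a^{-1/2}\ee+b^{-1/2}\eee$. You merely spell out the verification that the paper leaves as ``easy to check.''
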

\begin{proof}
Since $\scalarmath{\ket{\psi}}{\ket{\psi}}\in\D^+$ and
$\ket{\psi}$ is not in the null cone, we can write
\begin{equation}
\scalarmath{\ket{\psi}}{\ket{\psi}} = a \ee + b \eee ,
\end{equation}
with $a>0$ and $b>0$.  It is easy to check that
the ket
\begin{equation*}
\ket{\phi} = \left( \frac{1}{\sqrt{a}} \ee
+ \frac{1}{\sqrt{b}} \eee \right) \ket{\psi}
\end{equation*}
satisfies $(\ket{\phi}, \ket{\phi}) = 1$.
\end{proof}
%%%%%

Note that normalization would be impossible if
the scalar product were outside $\D^+$, that is,
if either $a$ or $b$ were negative.

%%%%%%%%%%%%%%%%%%%%%%%%%%%%%%%%%%%%%%%%%
%%%%%%%%%%%%%%%%%%%%%%%%%%%%%%%%%%%%%%%%%
\subsection{Self-Adjoint Operators}

In Theorem~\ref{Theo lin func} we showed that with
finite-dimensional free $\T$-modules, linear
functionals are in one-to-one correspondence with
kets and act like scalar products.  This allows for
the introduction of Dirac's bra notation and the
alternative writing of the scalar product
$(\ket{\psi}, \ket{\phi})$ as $\braket{\psi}{\phi}$.

In~\cite{Rochon3} the bicomplex \emph{adjoint}
operator $A^*$ of $A$ was introduced as the unique
operator that satisfies
\begin{equation}
(\ket{\psi}, A \ket{\phi})
= (A^* \ket{\psi}, \ket{\phi}),\qquad
\forall\ket{\psi}, \ket{\phi} \in M .
\label{4.13}
\end{equation}
In finite-dimensional free $\T$-modules
the adjoint always exists, is linear and satisfies
\begin{equation}
(A^*)^* = A, \quad
(sA + tB)^* = s^{\dag_3}A^* + t^{\dag_3} B^*,
\quad (AB)^*=B^*A^* .
\label{4.14}
\end{equation}
Moreover,
\begin{equation}
\P{k}{A}^* =\P{k}{A^*},\qquad k=1,2 ,
\label{4.15}
\end{equation}
where $\P{k}{A}^*$ is the $\C(\ii)$ adjoint on $V$.

%%%%%
\begin{lem}
Let $\ket{\psi},\ket{\phi}\in M$. Define an operator
$\ket{\phi}\bra{\psi}$ so that its action on an arbitrary
ket $\ket{\chi}$ is given by
$(\ket{\phi} \bra{\psi}) \ket{\chi}
= \ket{\phi}\(\braket{\psi}{\chi}\)$.
Then $\ket{\phi}\bra{\psi}$ is a linear operator on $M$.
\label{Lemma4.1}
\end{lem}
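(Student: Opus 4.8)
The plan is to verify directly that $\ket{\phi}\bra{\psi}$ satisfies the two requirements for a bicomplex linear operator on $M$: that it maps $M$ into $M$, and that it respects bicomplex linear combinations in the sense of~\eqref{2.27}. Both follow routinely from the module structure and the scalar product axioms, so the work is purely a matter of careful bookkeeping.

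First I would check that the operator is well defined as a map $M\to M$. For any $\ket{\chi}\in M$, the quantity $\braket{\psi}{\chi}=(\ket{\psi},\ket{\chi})$ is, by Definition~\ref{Definition4.1}, a bicomplex number. Since $M$ is a $\T$-module, the product of the fixed ket $\ket{\phi}$ with this bicomplex scalar is again an element of $M$. Hence $(\ket{\phi}\bra{\psi})\ket{\chi}\in M$ for every $\ket{\chi}$, so $\ket{\phi}\bra{\psi}$ genuinely sends $M$ into $M$.

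The substance of the argument is the linearity check. Given $s,t\in\T$ and $\ket{\chi_1},\ket{\chi_2}\in M$, I would compute
\begin{align*}
(\ket{\phi}\bra{\psi})(s\ket{\chi_1}+t\ket{\chi_2})
&= \ket{\phi}\,(\ket{\psi},\,s\ket{\chi_1}+t\ket{\chi_2}) \\
&= \ket{\phi}\,\big( s(\ket{\psi},\ket{\chi_1})+t(\ket{\psi},\ket{\chi_2}) \big),
\end{align*}
where the second equality invokes the additivity (property~1) and homogeneity (property~2) of the bicomplex scalar product in its second argument, as listed in Definition~\ref{Definition4.1}. Because $\T$ is commutative and $M$ is a $\T$-module, the right-hand side rearranges to $s\,\ket{\phi}(\ket{\psi},\ket{\chi_1})+t\,\ket{\phi}(\ket{\psi},\ket{\chi_2})$, which is exactly $s(\ket{\phi}\bra{\psi})\ket{\chi_1}+t(\ket{\phi}\bra{\psi})\ket{\chi_2}$. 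This is precisely condition~\eqref{2.27}, establishing linearity.

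There is no genuine obstacle here; the statement is a direct consequence of the scalar product axioms. The only point requiring attention is the placement of scalars: the operator is defined with the bicomplex factor written to the right of $\ket{\phi}$, so I would rely on the commutativity of the ring $\T$ to move the coefficients $s$ and $t$ past $\ket{\phi}$, ensuring the final expression matches the left-hand side of~\eqref{2.27} verbatim.
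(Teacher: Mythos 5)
Your proposal is correct and follows essentially the same route as the paper's own proof: expand the scalar product of $\ket{\psi}$ with $s\ket{\chi_1}+t\ket{\chi_2}$ using properties 1 and 2 of Definition~\ref{Definition4.1}, then use the commutativity of $\T$ to move the coefficients past $\ket{\phi}$ and match condition~\eqref{2.27}. The only addition is your preliminary check that the operator maps $M$ into $M$, which the paper leaves implicit.
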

\begin{proof}
For any $\ket{\chi_1}$ and $\ket{\chi_2}$ in $M$
and for any $\alpha_1$ and $\alpha_2$ in $\T$, we have
\begin{align*}
(\ket{\phi}\bra{\psi}) \( \alpha_1 \ket{\chi_1}
+ \alpha_2 \ket{\chi_2} \)
&= \ket{\phi}\oa \bra{\psi}\( \alpha_1 \ket{\chi_1}
+ \alpha_2 \ket{\chi_2} \) \fa \\
&= \ket{\phi} \oa \alpha_1 \braket{\psi}{\chi_1}
+ \alpha_2 \braket{\psi}{\chi_2} \fa \\
&= \alpha_1 \ket{\phi}\( \braket{\psi}{\chi_1} \)
+ \alpha_2 \ket{\phi}\( \braket{\psi}{\chi_2} \) \\
&= \alpha_1 \(\ket{\phi}\bra{\psi}\) \ket{\chi_1}
+ \alpha_2 \(\ket{\phi}\bra{\psi}\) \ket{\chi_2} .
\end{align*}
Ring commutativity allowed us to move scalars freely
around kets.
\end{proof}
%%%%%

%%%%%
\begin{thm}
Let $\oa\ket{u_l}\fa$ be an orthonormal basis of $M$.  Then
\begin{equation*}
\sum_{l=1}^n{\ket{u_l}\bra{u_l}} = I .
\end{equation*}
\label{Theo4.6}
\end{thm}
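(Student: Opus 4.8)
The plan is to show that the operator $S := \sum_{l=1}^n \ket{u_l}\bra{u_l}$ acts as the identity on every ket of $M$, from which $S = I$ follows immediately. First I would observe that $S$ is a genuine linear operator on $M$: by Lemma~\ref{Lemma4.1} each summand $\ket{u_l}\bra{u_l}$ is linear, and a finite sum of linear operators is again linear (the defining property~\eqref{2.27} is preserved under addition). By Theorem~\ref{Theo3.4} every basis of $M$ has exactly $n$ elements, so the sum indexing is unambiguous.

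Next I would take an arbitrary $\ket{\psi}\in M$ and expand it in the given basis as $\ket{\psi} = \sum_{p=1}^n w_p \ket{u_p}$ with $w_p \in \T$. The key step is to identify these coefficients with scalar products. Using orthonormality, i.e. $\braket{u_l}{u_p} = \delta_{lp}$, together with additivity and homogeneity in the second argument of the scalar product (properties~1 and~2 of Definition~\ref{scalar}), I compute $\braket{u_l}{\psi} = \sum_{p=1}^n w_p \braket{u_l}{u_p} = \sum_{p=1}^n w_p \delta_{lp} = w_l$. Thus the $l$-th expansion coefficient is exactly $\braket{u_l}{\psi}$.

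Then I would apply $S$ to $\ket{\psi}$ and use the action defining $\ket{u_l}\bra{u_l}$ from Lemma~\ref{Lemma4.1}: $S\ket{\psi} = \sum_{l=1}^n \ket{u_l}\braket{u_l}{\psi} = \sum_{l=1}^n \ket{u_l}\, w_l = \sum_{l=1}^n w_l \ket{u_l} = \ket{\psi}$. Since $\ket{\psi}$ was arbitrary, $S$ and $I$ agree on all of $M$, hence $S = I$.

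The proof is essentially the familiar resolution-of-the-identity argument, and the only points requiring care are bicomplex rather than complex. The main subtlety is the step $\ket{u_l}\, w_l = w_l \ket{u_l}$: because the scalars $w_l = \braket{u_l}{\psi}$ are bicomplex numbers and $\T$ is a commutative ring, they may be moved freely across kets, exactly as noted at the end of Lemma~\ref{Lemma4.1}. The second point is that ``orthonormal'' must be read as $\braket{u_l}{u_p} = \delta_{lp}$ with the bicomplex $1$ on the diagonal (legitimate here since each basis ket, being outside the null cone by Theorem~\ref{Theo3.6}, can be normalized by Theorem~\ref{Theo2.3}); this is what collapses the double sum. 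No genuine obstacle arises beyond verifying these ring-theoretic details.
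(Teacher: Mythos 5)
Your proof is correct and follows essentially the same resolution-of-the-identity argument as the paper: the paper simply notes that it suffices to check the identity on the basis kets $\ket{u_p}$ themselves (since a linear operator is determined by its action on a basis), whereas you verify it on an arbitrary $\ket{\psi}$ by first identifying the expansion coefficients with $\braket{u_l}{\psi}$ — a minor variation on the same computation $\ket{u_l}\braket{u_l}{u_p}=\delta_{lp}\ket{u_l}$. Your added remarks on commutativity of $\T$ and on the meaning of orthonormality are sound but not a departure from the paper's route.
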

%

%\begin{proof}
\vspace{-1ex}\noindent\emph{Proof.}
Since the action of a linear operator is fully
determined by its action on elements of a basis,
it suffices to show that the equality holds on
elements of any basis.  Letting the operator on
the left-hand side act on $\ket{u_p}$, we have
\begin{equation*}
\left( \sum_{l=1}^n \ket{u_l}\bra{u_l} \right) \ket{u_p}
= \sum_{l=1}^n \ket{u_l} (\braket{u_l}{u_p})
= \sum_{l=1}^n \ket{u_l} \delta_{lp} = \ket{u_p} .
\tag*{\qed}
\end{equation*}
%\end{proof}
%%%%%

\smallskip
\begin{defn}
A bicomplex linear operator $H$ is called \emph{self-adjoint}
if $H^*=H$.
\label{D4a}
\end{defn}

%%%%%
\begin{lem}
Let $H:M\to M$ be a self-adjoint operator.
Then $P_k(H):V\to V$ $(k = 1, 2)$ is a self-adjoint
operator on $V$.
\label{Lemma4.2}
\end{lem}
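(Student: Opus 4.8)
The plan is to reduce the claim entirely to the relation~\eqref{4.15}, established earlier, which states that for any bicomplex linear operator $A$ and for $k = 1, 2$ one has $\P{k}{A}^* = \P{k}{A^*}$, the adjoint on the right being the $\C(\ii)$ adjoint on $V$ taken with respect to the projected scalar product $\scalarmath{\cdot}{\cdot}_{\widehat{k}}$. Before applying it I would first confirm that $\P{k}{H}$ really is a $\C(\ii)$-linear operator from $V$ to $V$: by~\eqref{2.29} the operator $\P{k}{H}$ is defined on all of $M$, and since $P_k$ sends $M$ into $V$ its image lies in $V$, so restricting its domain to $V$ gives a well-defined map $\P{k}{H}: V \to V$; its $\C(\ii)$-linearity follows from~\eqref{2.24} and~\eqref{2.29}, using that $P_k$ fixes every scalar of $\C(\ii)$.

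The core of the argument is then a single substitution. Since $H$ is self-adjoint, Definition~\ref{D4a} gives $H^* = H$. Applying $P_k$ to this identity and invoking~\eqref{4.15} yields
\begin{equation*}
\P{k}{H}^* = \P{k}{H^*} = \P{k}{H},
\end{equation*}
where the first equality is~\eqref{4.15} and the second is the self-adjointness of $H$. Thus $\P{k}{H}$ coincides with its own $\C(\ii)$ adjoint on $V$, which is exactly the statement that $\P{k}{H}$ is self-adjoint on $V$.

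I do not expect a serious obstacle, since essentially all the work is carried by~\eqref{4.15}. The one point deserving a moment of care is the bookkeeping of adjoints: one must make sure that the adjoint appearing in~\eqref{4.15} is precisely the adjoint with respect to $\scalarmath{\cdot}{\cdot}_{\widehat{k}}$ on $V$, so that the equality $\P{k}{H}^* = \P{k}{H}$ genuinely expresses self-adjointness of $\P{k}{H}$ on the complex pre-Hilbert space $(V, \scalarmath{\cdot}{\cdot}_{\widehat{k}})$ of Theorem~\ref{Theo4.2}. Once that identification is granted, the conclusion follows in one line.
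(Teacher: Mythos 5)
Your argument is correct and is essentially identical to the paper's one-line proof, which likewise applies~\eqref{4.15} to get $P_k(H)^* = P_k(H^*) = P_k(H)$. The extra remarks on well-definedness and $\C(\ii)$-linearity of $P_k(H)$ on $V$ are sound but not needed beyond what the paper already establishes.
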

\begin{proof}
By~\eqref{4.15}, $P_k(H)^*=P_k(H^*)=P_k(H)$.
\end{proof}
%%%%%

%%%%%
\begin{thm}
Two eigenkets of a bicomplex self-adjoint operator
are orthogonal if the difference of the two eigenvalues
is not in~$\nc$.
\label{Theo4.8}
\end{thm}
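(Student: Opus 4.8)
The plan is to reduce the bicomplex statement to the classical orthogonality of eigenvectors of a complex self-adjoint operator by projecting everything onto the two idempotent components. Write $H\ket{\psi}=\lambda\ket{\psi}$ and $H\ket{\phi}=\mu\ket{\phi}$ with $\ket{\psi},\ket{\phi}\notin\nc$ and $\lambda-\mu\notin\nc$. First I would invoke the equivalence~\eqref{2.30} between the bicomplex eigenket equation and its two complex shadows: for $k=1,2$ one has $H_{\h{k}}\ket{\psi}_{\h{k}}=\lambda_{\h{k}}\ket{\psi}_{\h{k}}$ and $H_{\h{k}}\ket{\phi}_{\h{k}}=\mu_{\h{k}}\ket{\phi}_{\h{k}}$, where $H_{\h{k}}=\P{k}{H}$. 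Because $\ket{\psi}$ and $\ket{\phi}$ lie outside the null cone (as required of eigenkets), each $\ket{\psi}_{\h{k}}$ and $\ket{\phi}_{\h{k}}$ is a genuine nonzero eigenvector in the complex Hilbert space $(V,\scalarmath{\cdot}{\cdot}_{\h{k}})$ of Corollary~\ref{Hilbert}.

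The crucial input is Lemma~\ref{Lemma4.2}: since $H$ is self-adjoint, each $H_{\h{k}}=\P{k}{H}$ is a self-adjoint operator on the complex Hilbert space $V$. The classical finite-dimensional spectral theory for complex self-adjoint operators then supplies two facts in each component: the eigenvalues $\lambda_{\h{k}},\mu_{\h{k}}$ are real, and eigenvectors belonging to \emph{distinct} eigenvalues are orthogonal with respect to $\scalarmath{\cdot}{\cdot}_{\h{k}}$. It therefore remains only to check, separately for $k=1$ and $k=2$, that $\lambda_{\h{k}}\neq\mu_{\h{k}}$.

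This is precisely where the hypothesis $\lambda-\mu\notin\nc$ enters. As recalled in Section~\ref{Bicomplex Numbers}, a bicomplex number lies outside the null cone exactly when both of its idempotent components are nonzero (equivalently, when it is invertible); hence $\lambda-\mu\notin\nc$ translates into $\lambda_{\h{1}}-\mu_{\h{1}}\neq 0$ \emph{and} $\lambda_{\h{2}}-\mu_{\h{2}}\neq 0$ simultaneously. Applying the complex orthogonality of the previous paragraph in each component gives $\scalarmath{\ket{\psi}_{\h{k}}}{\ket{\phi}_{\h{k}}}_{\h{k}}=0$ for $k=1,2$. Feeding these two vanishings into the idempotent decomposition of the scalar product, Theorem~\ref{decomp2}, yields
\begin{equation*}
\scalarmath{\ket{\psi}}{\ket{\phi}}
=\ee\scalarmath{\ket{\psi}_{\hh}}{\ket{\phi}_{\hh}}_{\hh}
+\eee\scalarmath{\ket{\psi}_{\hhh}}{\ket{\phi}_{\hhh}}_{\hhh}
=\ee\cdot 0+\eee\cdot 0=0 ,
\end{equation*}
which is the asserted orthogonality.

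The main obstacle is not computational but conceptual bookkeeping at the interface between the bicomplex and complex pictures: one must be careful that the eigenket condition $\ket{\psi},\ket{\phi}\notin\nc$ really does furnish nonzero complex eigenvectors in \emph{both} components, and that the null-cone condition on the difference $\lambda-\mu$ is the correct bicomplex replacement for ``distinct eigenvalues.'' As a cross-check, a purely bicomplex route avoids components entirely: using self-adjointness together with properties~(2) and~(3) of Definition~\ref{Definition4.1} one obtains $(\mu-\lambda^{\dag_3})\scalarmath{\ket{\psi}}{\ket{\phi}}=0$; since the components $\lambda_{\h{k}}$ are real the eigenvalue satisfies $\lambda^{\dag_3}=\lambda$, and because $\mu-\lambda\notin\nc$ is invertible in $\T$ one may cancel it to conclude $\scalarmath{\ket{\psi}}{\ket{\phi}}=0$. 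Both routes reduce, at bottom, to the reality of $\lambda_{\h{k}}$ provided by Lemma~\ref{Lemma4.2}.
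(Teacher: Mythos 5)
Your argument is correct, but your primary route is genuinely different from the one the paper takes. The paper works entirely at the bicomplex level: using $H^*=H$ and property~(3) of Definition~\ref{Definition4.1} it derives $(\lambda'-\lambda^{\dag_3})\,(\ket{\phi},\ket{\phi'})=0$, then imports from Theorem~14 of~\cite{Rochon3} the fact that eigenvalues of a self-adjoint operator lie in $\D$, so that $\lambda^{\dag_3}=\lambda$, and finally cancels the invertible factor $\lambda'-\lambda\notin\nc$. That is exactly the ``cross-check'' you sketch in your last paragraph, so you have in effect reproduced the paper's proof as a secondary argument. Your main proof instead splits the eigenket equation into its two complex shadows via~\eqref{2.30}, uses Lemma~\ref{Lemma4.2} to see that each $H_{\h{k}}$ is self-adjoint on $V$, invokes the classical orthogonality of eigenvectors with distinct real eigenvalues in each component, and reassembles the result with Theorem~\ref{decomp2}; the hypothesis $\lambda-\mu\notin\nc$ enters precisely as the statement that $\lambda_{\h{k}}\neq\mu_{\h{k}}$ for \emph{both} $k$. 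Both proofs are sound, and the bookkeeping you worry about (eigenkets outside $\nc$ giving nonzero vectors in both components, the null-cone condition on the difference being the right surrogate for ``distinct eigenvalues'') is handled correctly. What the componentwise route buys is transparency: it makes clear why the theorem can fail when $\lambda-\mu$ is a zero divisor (orthogonality then survives in only one component), and it rederives the reality of the $\lambda_{\h{k}}$ --- hence $\lambda\in\D$ --- from the classical theory rather than citing it. What the paper's route buys is brevity: a four-line computation that never leaves the bicomplex formalism, at the price of leaning on an external reference for $\lambda^{\dag_3}=\lambda$.
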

\begin{proof}
Let $H:M\to M$ be a self-adjoint operator and let
$\ket{\phi}$ and $\ket{\phi'}$ be two eigenkets of $H$
associated with eigenvalues $\lambda$ and $\lambda'$,
respectively. Then
\begin{align*}
0&=\scalarmath{\ket{\phi}}{H\ket{\phi'}}
-\scalarmath{\ket{\phi'}}{H\ket{\phi}}^{\dag_3}
=\lambda'\scalarmath{\ket{\phi}}{\ket{\phi'}}
-\[ \lambda\scalarmath{\ket{\phi'}}{\ket{\phi}} \]^{\dag_3}\\
&=\lambda'\scalarmath{\ket{\phi}}{\ket{\phi'}}
-\lambda^{\dag_3}\scalarmath{\ket{\phi'}}{\ket{\phi}}^{\dag_3}
=(\lambda'-\lambda^{\dag_3})\scalarmath{\ket{\phi}}{\ket{\phi'}}.
\end{align*}
Because $H$ is self-adjoint we know, from Theorem~14
of~\cite{Rochon3}, that $\lambda\in\D$.  Hence $\lambda^{\dag_3}=\lambda$
and if $\lambda'-\lambda\notin\nc$, then
$\scalarmath{\ket{\phi}}{\ket{\phi'}}=0$.
\end{proof}
%%%%%

With the structure we have now built, we
can prove the spectral decomposition theorem
for finite-dimensional bicomplex Hilbert spaces.
%%%%%
\begin{thm}
Let $M$ be a finite-dimensional free $\mathbb{T}$-module
and let $H:M\to M$ be a bicomplex self-adjoint operator.
It is always possible to find a set $\{ \ket{\phi_l} \}$
of eigenkets of $H$ that make up an orthonormal basis of $M$.
Moreover, $H$ can be expressed as
\begin{equation}
H=\sum_{l=1}^n{\lambda_l \ket{\phi_l}\bra{\phi_l}} ,
\label{4.16}
\end{equation}
where $\lambda_l$ is the eigenvalue of $H$ associated with
the eigenket $\ket{\phi_l}$.
\label{Theo Spectral}
\end{thm}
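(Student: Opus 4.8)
The plan is to reduce the bicomplex spectral problem to the ordinary finite-dimensional spectral theorem on the two complex Hilbert spaces sitting inside $M$, exactly in the spirit of the idempotent decomposition used throughout. First I would write $H = \ee\P{1}{H} + \eee\P{2}{H}$ as in~\eqref{2.24b}. By Lemma~\ref{Lemma4.2}, each $\P{k}{H}$ is a self-adjoint operator on the complex $\mC(\bo)$ Hilbert space $V$ of Corollary~\ref{Corollary4.1}. The classical spectral theorem then supplies, for each $k=1,2$, an orthonormal basis $\{\ket{\phi_l}_\h{k}\}_{l=1}^n$ of $V$ consisting of eigenvectors of $\P{k}{H}$, say $\P{k}{H}\ket{\phi_l}_\h{k} = \lambda_{l\h{k}}\ket{\phi_l}_\h{k}$ with $\lambda_{l\h{k}}\in\mC(\bo)$. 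Both bases are indexed by $l = 1\ldots n$ since $\dim V = n$ in either case; the pairing of the two labels is free, which is precisely the freedom already noted after Theorem~\ref{Theo3.3.1}.

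Next I would lift these data back to $M$. Setting $\ket{\phi_l} := \ee\ket{\phi_l}_\hh + \eee\ket{\phi_l}_\hhh$ and $\lambda_l := \ee\lambda_{l\hh} + \eee\lambda_{l\hhh}$, the equivalence~\eqref{2.30} between the eigenket equation and its two complex projections shows at once that $H\ket{\phi_l} = \lambda_l\ket{\phi_l}$. Orthonormality is a one-line computation with Theorem~\ref{Theo4.1}: since the two component bases are orthonormal in the $\scalarmath{\cdot}{\cdot}_\h{k}$ products,
\begin{equation*}
\scalarmath{\ket{\phi_l}}{\ket{\phi_p}}
= \ee\scalarmath{\ket{\phi_l}_\hh}{\ket{\phi_p}_\hh}_\hh
+ \eee\scalarmath{\ket{\phi_l}_\hhh}{\ket{\phi_p}_\hhh}_\hhh
= \ee\delta_{lp} + \eee\delta_{lp} = \delta_{lp}.
\end{equation*}
In particular $\scalarmath{\ket{\phi_l}}{\ket{\phi_l}} = 1 \notin\nc$, so by Corollary~\ref{Th4c} each $\ket{\phi_l}$ lies outside the null cone and is therefore a genuine eigenket in the sense of Definition~\ref{Definition2.1a}. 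That $\{\ket{\phi_l}\}$ is actually a basis of $M$ is exactly the construction carried out at the end of Theorem~\ref{Theo3.3.1}: a set $\{\ee\ket{s'_l}_\hh + \eee\ket{s'_l}_\hhh\}$ assembled from an orthonormal basis of $V$ on each idempotent component is an orthogonal basis of $M$, here normalized.

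Finally, for the decomposition~\eqref{4.16} I would invoke the resolution of identity of Theorem~\ref{Theo4.6}, namely $\sum_{l=1}^n\ket{\phi_l}\bra{\phi_l} = I$, and apply $H$ on the left. Using linearity of $H$ together with ring commutativity to move the scalar $\braket{\phi_l}{\chi}$ past $H$, one finds $H\ket{\phi_l}\bra{\phi_l} = \lambda_l\ket{\phi_l}\bra{\phi_l}$ as operators, whence $H = H\,I = \sum_{l=1}^n\lambda_l\ket{\phi_l}\bra{\phi_l}$. I do not expect a genuine obstacle here: all the analytic substance is delegated to the classical spectral theorem on $V$, so the only point requiring care is the index bookkeeping — matching the two independently obtained complex eigenbases under a common label $l$ and then confirming, through~\eqref{2.30} and Theorem~\ref{Theo4.1}, that the reassembled kets are simultaneously eigenkets of $H$ and orthonormal.
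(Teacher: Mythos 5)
Your proposal is correct and follows essentially the same route as the paper: apply the classical spectral theorem to the self-adjoint components $P_k(H)$ on $V$, reassemble the eigenkets as $\ket{\phi_l}=\ee\ket{\phi_l}_\hh+\eee\ket{\phi_l}_\hhh$, and verify orthonormality via the idempotent decomposition of the scalar product. Your final step (applying $H$ to the resolution of identity from Theorem~\ref{Theo4.6}) is only a cosmetic variant of the paper's direct check that both sides of~\eqref{4.16} agree on basis kets.
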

%
%\begin{proof}
\noindent\emph{Proof.}
We first remark that the classical spectral decomposition theorem
holds for the self-adjoint operator $P_{k}(H)=H_\h{k}$, restricted to~$V$
($k=1,2$).  So let $\{ \ket{\phi_l}_\hh \}$ and $\{ \ket{\phi_l}_\hhh \}$
be orthonormal sets of eigenvectors of $H_\hh$ and $H_\hhh$,
respectively.  They make up orthonormal bases of~$V$ with respect
to the scalar products $\scalarmath{\cdot}{\cdot}_{\widehat{1}}$
and $\scalarmath{\cdot}{\cdot}_{\widehat{2}}$. Letting
$\ket{\phi_l}:= \ee \ket{\phi_l}_\hh + \eee \ket{\phi_l}_\hhh$,
we can see that $\{ \ket{\phi_l} \}$ is an orthonormal basis of $M$.
Let $\lambda_l$ be the eigenvalue of $H$ associated with
$\ket{\phi_l}$, so that $H\ket{\phi_l}=\lambda_l\ket{\phi_l}$.
To show that~\eqref{4.16} holds, it is enough to show that
the right-hand side of~\eqref{4.16} acts on basis kets
like $H$.  But
\begin{equation*}
\[ \sum_{l=1}^n \lambda_l \ket{\phi_l} \bra{\phi_l} \] \ket{\phi_p}
= \sum_{l=1}^n \lambda_l \ket{\phi_l}
(\braket{\phi_l}{\phi_p})
= \sum_{l=1}^n \lambda_l \delta_{lp} \ket{\phi_l}
= \lambda_p \ket{\phi_p} .
\tag*{\qed}
\end{equation*}
%\end{proof}
%%%%%

%%%%%%%%%%%%%%%%%%%%%%%%%%%%%%%%%%%%%%%%%%%%%%%%%%%%%%%%%%%%%%%%%%%%%%%%%%%%%%%%%%%%%%%%%%%%%%%%%%%%%%%%%%%%%%%%%%%%%%%%%%%
%\newpage
\section{Applications}

As an application of the results obtained in the
previous sections, we will develop the bicomplex version
of the quantum-mechanical evolution operator.
To do this, we first need to define bicomplex
unitary operators as well as functions of a bicomplex operator.

%%%%%%%%%%%%%%%%%%%%%%%%%%%%%%%%%%%%%%%%%
\subsection{Unitary Operators}

\begin{defn}
A bicomplex linear operator $U$ is called \emph{unitary}
if $U^* U = I$.
\label{D5a}
\end{defn}

From Definition~\ref{D5a} one easily sees that the action
of a bicomplex unitary operator preserves scalar products.
Indeed let $\ket{\psi},\ket{\phi}\in M$ and let $U$ be unitary.
Then
\begin{equation}
\scalarmath{U\ket{\psi}}{U\ket{\phi}}
= \scalarmath{U^*U \ket{\psi}}{\ket{\phi}}
= \scalarmath{I \ket{\psi}}{\ket{\phi}}
= \scalarmath{\ket{\psi}}{\ket{\phi}} .
\end{equation}

%%%%%
\begin{lem}
Let $U:M\to M$ be a unitary operator. Then $P_k(U):V\to V$
$(k = 1, 2)$ is a unitary operator on~$V$.
\label{Lemma4.3}
\end{lem}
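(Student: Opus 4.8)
The plan is to show that the complex‐space adjoint of $P_k(U)$ equals $P_k(U^*)$ and then to pull the defining relation $U^*U = I$ through the projection $P_k$ to conclude $P_k(U)^* P_k(U) = I$ on $V$. The essential tools are already established: equation~\eqref{4.15}, which says $P_k(A)^* = P_k(A^*)$ for the $\C(\ii)$ adjoint on $V$, and part~(2) of Theorem~\ref{Theo3.8}, which gives $P_k(A \circ B) = P_k(A) \circ P_k(B)$. So the argument is essentially a two-line computation once these are invoked in the right order.

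\begin{proof}
Since $U$ is unitary, $U^* U = I$. Applying $P_k$ to both sides and making use of part~(2) of Theorem~\ref{Theo3.8}, we obtain
\begin{equation*}
P_k(U^*) \circ P_k(U) = P_k(U^* \circ U) = P_k(I) = I ,
\end{equation*}
where the last equality holds because $P_k$ fixes the identity operator (as $I = \ee I + \eee I$, so that $I_\h{k} = I$). By~\eqref{4.15}, $P_k(U^*) = P_k(U)^*$, the $\C(\ii)$ adjoint on $V$. Substituting, we find
\begin{equation*}
P_k(U)^* \, P_k(U) = I ,
\end{equation*}
which is precisely the statement that $P_k(U):V\to V$ is a unitary operator on $V$.
\end{proof}

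The one point requiring a small verification is that $P_k(I) = I$ on $V$, but this is immediate from the decomposition $I = \ee I + \eee I$ together with the definition~\eqref{2.29} of the operator projection, so $I_\h{k}$ is the identity on $V$. I do not anticipate any genuine obstacle here: the result is a direct corollary of the componentwise structure of bicomplex operators developed in Section~3, and the analogous lemmas for self-adjoint operators (Lemma~\ref{Lemma4.2}) follow the identical template, which reassures me that this is the intended route rather than a more delicate one.
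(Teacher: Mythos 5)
Your proof is correct and follows essentially the same route as the paper's: the paper's one-line argument is exactly the chain $P_k(U)^* P_k(U) = P_k(U^*) P_k(U) = P_k(U^* U) = P_k(I) = I$, invoking~\eqref{4.15} and Theorem~\ref{Theo3.8}. Your version merely reads the chain in the opposite direction and makes explicit the (correct and trivially verified) fact that $P_k(I) = I$.
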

%
%\begin{proof}
\noindent\emph{Proof.}
From~\eqref{4.15} and Theorem~\ref{Theo3.8} we can write
\begin{equation*}
P_k(U)^* P_k(U) = P_k(U^*) P_k(U) = P_k(U^* U)
= P_k(I) = I .
\tag*{\qed}
\end{equation*}
%\end{proof}
%%%%%

We note that a bicomplex unitary operator cannot be
in the null cone.  For if it were, its determinant would
also be in the null cone and the operator would not have
an inverse.

%%%%%
\begin{thm}
Any eigenvalue $\lambda$ of a bicomplex unitary operator
satisfies $\lambda^{\dag_3}\lambda = 1$.
\label{Theo4.10}
\end{thm}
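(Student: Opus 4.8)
The plan is to exploit the idempotent decomposition that has organized every earlier proof in this paper. Writing $U = \ee P_1(U) + \eee P_2(U)$ and $\lambda = \ee \lambda_\hh + \eee \lambda_\hhh$, the eigenket equation $U\ket{\psi} = \lambda\ket{\psi}$ splits, via~\eqref{2.30}, into the two component equations $P_k(U)\ket{\psi}_\h{k} = \lambda_\h{k}\ket{\psi}_\h{k}$ on the vector space $V$, where each $\ket{\psi}_\h{k}$ is nonzero because $\ket{\psi}$ is not in the null cone. By Lemma~\ref{Lemma4.3}, each $P_k(U)$ is a genuine unitary operator on the complex Hilbert space $(V, \scalarmath{\cdot}{\cdot}_\h{k})$.

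First I would invoke the classical fact that every eigenvalue of a unitary operator on a complex inner-product space has modulus one: since $P_k(U)\ket{\psi}_\h{k} = \lambda_\h{k}\ket{\psi}_\h{k}$ with $\ket{\psi}_\h{k}\neq 0$, unitarity gives
\begin{equation*}
\scalarmath{\ket{\psi}_\h{k}}{\ket{\psi}_\h{k}}_\h{k}
= \scalarmath{P_k(U)\ket{\psi}_\h{k}}{P_k(U)\ket{\psi}_\h{k}}_\h{k}
= \overline{\lambda_\h{k}} \, \lambda_\h{k} \,
\scalarmath{\ket{\psi}_\h{k}}{\ket{\psi}_\h{k}}_\h{k} ,
\end{equation*}
and cancelling the nonzero positive scalar yields $\overline{\lambda_\h{k}}\,\lambda_\h{k} = 1$ for $k = 1, 2$.

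Then I would reassemble these two complex identities into the bicomplex statement. The key computational link is that the $\dag_3$ conjugation acts componentwise in the idempotent basis: since $w^{\dag_3} = \bar z_\hh \ee + \bar z_\hhh \eee$ (noted just after~\eqref{2.21}), we have $\lambda^{\dag_3} = \overline{\lambda_\hh}\,\ee + \overline{\lambda_\hhh}\,\eee$. Using the multiplication rule~\eqref{2.20} for the idempotent basis,
\begin{equation*}
\lambda^{\dag_3}\lambda
= \( \overline{\lambda_\hh}\,\ee + \overline{\lambda_\hhh}\,\eee \)
  \( \lambda_\hh\,\ee + \lambda_\hhh\,\eee \)
= \overline{\lambda_\hh}\,\lambda_\hh\,\ee
  + \overline{\lambda_\hhh}\,\lambda_\hhh\,\eee
= \ee + \eee = 1 ,
\end{equation*}
which is the desired conclusion.

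I expect the main obstacle to be purely bookkeeping rather than conceptual: one must be careful that the scalar products appearing in the cancellation step are the correct projected standard products $\scalarmath{\cdot}{\cdot}_\h{k}$ on $V$ (so that classical unitarity applies verbatim), and that $\ket{\psi}\notin\nc$ is precisely what guarantees both components $\ket{\psi}_\h{k}$ are nonzero, legitimizing the cancellation. Once the correspondence between the bicomplex eigenvalue equation and its two complex projections is invoked cleanly, the argument reduces to the familiar complex case applied twice and a one-line recombination in the idempotent basis.
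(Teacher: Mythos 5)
Your proof is correct, but it takes a different route from the paper's. The paper argues entirely at the bicomplex level: since $U$ preserves the bicomplex scalar product, $\scalarmath{\ket{\phi}}{\ket{\phi}} = \scalarmath{U\ket{\phi}}{U\ket{\phi}} = \lambda^{\dag_3}\lambda\,\scalarmath{\ket{\phi}}{\ket{\phi}}$, and then it cancels $\scalarmath{\ket{\phi}}{\ket{\phi}}$, which is invertible because an eigenket is not in the null cone (Corollary~\ref{Th4c}). That is a one-line computation whose only nontrivial ingredients are the antilinearity property $\scalarmath{\lambda\ket{\phi}}{\lambda\ket{\phi}} = \lambda^{\dag_3}\lambda\,\scalarmath{\ket{\phi}}{\ket{\phi}}$ and the invertibility of the norm of an eigenket; it also runs exactly parallel to the proofs of Theorems~\ref{Theo4.8} and~\ref{Theo4.11}. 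You instead project everything onto the two idempotent components, apply the classical ``unitary eigenvalues have modulus one'' fact in each complex Hilbert space $(V,\scalarmath{\cdot}{\cdot}_{\h{k}})$ via Lemma~\ref{Lemma4.3} and equation~\eqref{2.30}, and recombine using $\lambda^{\dag_3} = \overline{\lambda_\hh}\,\ee + \overline{\lambda_\hhh}\,\eee$ and~\eqref{2.20}. What your version buys is transparency about where the hypothesis $\ket{\psi}\notin\nc$ enters (both components must be nonzero for the two cancellations) and a clean reduction to a textbook theorem; what the paper's version buys is brevity and independence from the component machinery, at the cost of needing Corollary~\ref{Th4c} to justify the cancellation of a hyperbolic number rather than a positive real. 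Both arguments are complete and rest on results already established in the paper.
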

\begin{proof}
Let $\ket{\phi}\in M$ be an eigenket of a unitary
operator $U$, associated with the eigenvalue $\lambda$,
so that $U\ket{\phi}=\lambda\ket{\phi}$. Since $U$ preserves
scalar products, we can write
\begin{equation*}
\scalarmath{\ket{\phi}}{\ket{\phi}}
= \scalarmath{U\ket{\phi}}{U\ket{\phi}}
= \scalarmath{\lambda\ket{\phi}}{\lambda\ket{\phi}}
= \lambda^{\dag_3}\lambda \scalarmath{\ket{\phi}}{\ket{\phi}} .
\end{equation*}
Since an eigenket is not in the null cone,
$\lambda^{\dag_3}\lambda=1$ or, equivalently,
$\lambda^{\dag_3}=\lambda^{-1}$.
\end{proof}
%%%%%

%%%%%
\begin{cor}
Let $U$ be a unitary operator and let $\ket{\phi}\in M$
be an eigenket of $U$ associated with the eigenvalue $\lambda$.
Then $U^*\ket{\phi}=\lambda^{\dag_3}\ket{\phi}$.
\label{Corollary4.2}
\end{cor}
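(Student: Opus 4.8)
The plan is to derive $U^*\ket{\phi}=\lambda^{\dag_3}\ket{\phi}$ directly from the eigenket equation $U\ket{\phi}=\lambda\ket{\phi}$, by applying the adjoint and then exploiting the defining relation $U^*U=I$ together with the constraint on $\lambda$ already established in Theorem~\ref{Theo4.10}. No new machinery is needed; everything follows from results proved earlier in this section.

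First I would apply $U^*$ to both sides of $U\ket{\phi}=\lambda\ket{\phi}$. On the left this produces $U^*U\ket{\phi}$, which equals $\ket{\phi}$ by Definition~\ref{D5a}. On the right, since $\lambda\in\T$ is a scalar and $U^*$ is a bicomplex linear operator, linearity~\eqref{2.27} lets me pull $\lambda$ through the operator, giving $\lambda\,U^*\ket{\phi}$. Hence
\begin{equation*}
\ket{\phi}=\lambda\,U^*\ket{\phi}.
\end{equation*}

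Next I would left-multiply this identity by $\lambda^{\dag_3}$ and invoke Theorem~\ref{Theo4.10}, which guarantees $\lambda^{\dag_3}\lambda=1$. Using the commutativity of $\T$ to reorder the scalar factors, the right-hand side collapses to $(\lambda^{\dag_3}\lambda)\,U^*\ket{\phi}=U^*\ket{\phi}$, while the left-hand side is $\lambda^{\dag_3}\ket{\phi}$, yielding the claim.

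There is essentially no obstacle here: the only point that needs care is the cancellation of $\lambda$, which is legitimate precisely because Theorem~\ref{Theo4.10} shows that $\lambda$ is invertible with inverse $\lambda^{\dag_3}$ (so in particular $\lambda\notin\nc$). Multiplying by $\lambda^{\dag_3}$, rather than attempting a naive division, keeps the whole argument inside the commutative ring $\T$ and sidesteps any reliance on a division-algebra structure, which $\T$ does not possess.
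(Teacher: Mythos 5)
Your proof is correct and is essentially the paper's own argument: both rest on inserting $U^*U=I$, using the eigenket equation and linearity of $U^*$ to produce $\lambda^{\dag_3}\lambda\,U^*\ket{\phi}$, and then cancelling via $\lambda^{\dag_3}\lambda=1$ from Theorem~\ref{Theo4.10}. The only difference is presentational — you apply $U^*$ to the eigenket equation first and then multiply by $\lambda^{\dag_3}$, whereas the paper runs the same chain of equalities starting from $\lambda^{\dag_3}\ket{\phi}$.
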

\begin{proof}
Because $U$ is unitary, one can write
\begin{equation*}
\lambda^{\dag_3}\ket{\phi} = \lambda^{\dag_3}I\ket{\phi}
= \lambda^{\dag_3}U^*U\ket{\phi}
= \lambda^{\dag_3}\lambda U^*\ket{\phi} .
\end{equation*}
The result follows from Theorem~\ref{Theo4.10}.
\end{proof}
%%%%%

%%%%%
\begin{thm}
Two eigenkets of a bicomplex unitary operator are
orthogonal if the difference of the eigenvalues is not in $\nc$.
\label{Theo4.11}
\end{thm}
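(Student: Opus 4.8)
The plan is to mimic the proof of Theorem~\ref{Theo4.8} (the self-adjoint case) but using the characterization of unitary eigenvalues established in Corollary~\ref{Corollary4.2} in place of the reality of eigenvalues. First I would let $U:M\to M$ be unitary and take two eigenkets $\ket{\phi}$ and $\ket{\phi'}$ with eigenvalues $\lambda$ and $\lambda'$, so that $U\ket{\phi}=\lambda\ket{\phi}$ and $U\ket{\phi'}=\lambda'\ket{\phi'}$. The natural quantity to examine is the scalar product $\scalarmath{\ket{\phi}}{U\ket{\phi'}}$, which I would evaluate in two different ways.

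On the one hand, moving $U$ to act to the right gives $\scalarmath{\ket{\phi}}{U\ket{\phi'}}=\lambda'\scalarmath{\ket{\phi}}{\ket{\phi'}}$, using property~2 of the scalar product (linearity in the second slot). On the other hand, moving $U$ across the scalar product via the adjoint yields $\scalarmath{\ket{\phi}}{U\ket{\phi'}}=\scalarmath{U^*\ket{\phi}}{\ket{\phi'}}$, and here I would invoke Corollary~\ref{Corollary4.2}, which tells us that $U^*\ket{\phi}=\lambda^{\dag_3}\ket{\phi}$. This turns the right-hand side into $\scalarmath{\lambda^{\dag_3}\ket{\phi}}{\ket{\phi'}}$. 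The one subtlety worth flagging is that a scalar pulled out of the \emph{first} slot picks up a $\dag_3$ conjugation by property~3, so $\scalarmath{\lambda^{\dag_3}\ket{\phi}}{\ket{\phi'}}=(\lambda^{\dag_3})^{\dag_3}\scalarmath{\ket{\phi}}{\ket{\phi'}}=\lambda\scalarmath{\ket{\phi}}{\ket{\phi'}}$, using the involution property of $\dag_3$ from~\eqref{2.6}.

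Equating the two evaluations gives $(\lambda'-\lambda)\scalarmath{\ket{\phi}}{\ket{\phi'}}=0$. The conclusion then follows from the hypothesis that $\lambda'-\lambda\notin\nc$: since $\lambda'-\lambda$ is not a zero divisor, it is invertible (as established in the discussion around~\eqref{2.20}), so multiplying through by $(\lambda'-\lambda)^{-1}$ forces $\scalarmath{\ket{\phi}}{\ket{\phi'}}=0$, which is precisely orthogonality. I do not anticipate a serious obstacle here; the only point requiring care is the bookkeeping of the $\dag_3$ conjugation when scalars are extracted from the first argument, and making sure to use Corollary~\ref{Corollary4.2} rather than attempting to argue directly that $\lambda$ lies in $\D$ (which is false for unitary operators, where instead $\lambda^{\dag_3}\lambda=1$).
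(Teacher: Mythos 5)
Your proof is correct and follows essentially the same route as the paper's: both evaluate $\scalarmath{\ket{\phi}}{U\ket{\phi'}}$ by moving $U$ onto each eigenket in turn, invoke Corollary~\ref{Corollary4.2} to get $U^*\ket{\phi}=\lambda^{\dag_3}\ket{\phi}$, track the $\dag_3$ conjugation through property~3, and conclude from $(\lambda'-\lambda)\scalarmath{\ket{\phi}}{\ket{\phi'}}=0$ with $\lambda'-\lambda$ invertible. The only difference is presentational (the paper writes the two evaluations as a single difference equal to zero), and you correctly make explicit the invertibility of $\lambda'-\lambda$ that the paper leaves implicit.
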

\begin{proof}
Let $U:M\to M$ be a unitary operator and $\ket{\phi},\ket{\phi '}$
be two eigenkets of $U$ associated with eigenvalues $\lambda$
and $\lambda '$, respectively. Corollary~\ref{Corollary4.2}
then implies
\begin{align*}
0 &=\scalarmath{\ket{\phi}}{U\ket{\phi'}}
- \scalarmath{\ket{\phi'}}{U^*\ket{\phi}}^{\dag_3}
= \lambda'\scalarmath{\ket{\phi}}{\ket{\phi'}}
- \[ \lambda^{\dag_3}\scalarmath{\ket{\phi'}}{\ket{\phi}} \]^{\dag_3}\\
&= \lambda'\scalarmath{\ket{\phi}}{\ket{\phi'}}
- \lambda\scalarmath{\ket{\phi}}{\ket{\phi'}}
= (\lambda'-\lambda)\scalarmath{\ket{\phi}}{\ket{\phi'}}.
\end{align*}
If $\lambda'-\lambda\notin\nc$, then $\scalarmath{\ket{\phi}}{\ket{\phi'}}=0$.
\end{proof}
%%%%%

\subsection{Functions of an Operator}

Let $M$ be a finite-dimensional free $\T$-module and let
$A$ be a linear operator acting on $M$.  Let $A^0 := I$ and
let $\{ c_n ~|~ n=0, 1, \ldots \}$ be an infinite sequence of
bicomplex numbers.  Formally we can write the infinite sum
\begin{equation}
\sum_{n=0}^\infty c_n A^n .
\label{5.1}
\end{equation}
When this series converges to an operator acting on $M$,
we call this operator $f(A)$.

The operator $A$ and the coefficients $c_n$ can be
written in the idempotent basis as
\begin{equation}
A = \ee A_{\hh} + \eee A_{\hhh} , \qquad
c_n = \ee c_{n\hh} + \eee c_{n\hhh} .
\label{5.1a}
\end{equation}
Substituting \eqref{5.1a} into~\eqref{5.1}, we get
\begin{align}
f(A) &= \sum_{n=0}^\infty c_n A^n
= \ee \sum_{n=0}^\infty c_{n\hh} A_\hh^n
+ \eee \sum_{n=0}^\infty c_{n\hhh} A_\hhh^n \notag \\
&= \ee f_1 (A_\hh) + \eee f_2 (A_\hhh) .
\label{5.1b}
\end{align}
One can see that the $f$ series converges
if and only if the two series $f_1$ and $f_2$
converge.  These two are power series of
operators acting in a finite-dimensional
complex vector space.

A very important bicomplex function of an operator is
of course the \emph{exponential}, defined in the usual way as
\begin{equation}
\exp{A}=I+\sum_{n=1}^\infty{\frac{1}{n!}A^n} .
\label{5.2}
\end{equation}
Clearly,
\begin{equation}
\exp{A} = \ee \, \exp{A_\hh} + \eee \, \exp{A_\hhh} .
\label{5.2a}
\end{equation}

We now prove two important theorems on exponentials
of operators.

%%%%%
\begin{thm}
If $t$ is a real parameter,
$\frac{d}{dt} \exp{tA} = A \exp{tA}$.
\label{Theo5.1a}
\end{thm}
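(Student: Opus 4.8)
The plan is to reduce the bicomplex statement to two copies of the familiar complex-analytic fact, using the idempotent decomposition that has been the workhorse throughout the paper. By~\eqref{5.2a} we have $\exp{tA} = \ee\,\exp{tA_\hh} + \eee\,\exp{tA_\hhh}$, where $A_\hh$ and $A_\hhh$ are ordinary complex $n\times n$ matrices acting on the finite-dimensional complex vector spaces $V_1$ and $V_2$. The key observation is that $t\mapsto\exp{tA_\h{k}}$ is a genuine matrix-valued power series in the real parameter $t$, so differentiation with respect to $t$ is a completely classical operation performed independently in each idempotent component.

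\noindent\textbf{Key steps.}
First I would write $\exp{tA}$ in the idempotent basis via~\eqref{5.2a} and note that, since $\ee$ and $\eee$ are constants (not depending on $t$), the derivative $\frac{d}{dt}\exp{tA}$ is obtained by differentiating each component separately:
\begin{equation*}
\frac{d}{dt}\exp{tA}
= \ee\,\frac{d}{dt}\exp{tA_\hh}
+ \eee\,\frac{d}{dt}\exp{tA_\hhh}.
\end{equation*}
Second, I would invoke the classical result for complex matrices—that $\frac{d}{dt}\exp{tA_\h{k}} = A_\h{k}\exp{tA_\h{k}}$ for $k=1,2$—which is justified by the term-by-term differentiability of the absolutely convergent series $\exp{tA_\h{k}} = I + \sum_{n=1}^\infty \frac{t^n}{n!}A_\h{k}^n$ on any compact interval of $t$. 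Third, I would substitute these back to obtain
\begin{equation*}
\frac{d}{dt}\exp{tA}
= \ee\,A_\hh\exp{tA_\hh}
+ \eee\,A_\hhh\exp{tA_\hhh}.
\end{equation*}
Finally, I would recognize the right-hand side as the idempotent decomposition of the product $A\exp{tA}$, using the multiplicative rule~\eqref{2.20} together with~\eqref{5.2a} and~\eqref{3.1a}, which gives $A\exp{tA} = \ee\,A_\hh\exp{tA_\hh} + \eee\,A_\hhh\exp{tA_\hhh}$, completing the proof.

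\noindent\textbf{Main obstacle.} The only delicate point is justifying term-by-term differentiation of the exponential series, i.e. interchanging $\frac{d}{dt}$ with the infinite sum. In the bicomplex setting the cleanest route is the one taken above: push the entire analytic question into the two complex components $A_\hh$ and $A_\hhh$, where uniform convergence of the differentiated matrix series on compact $t$-intervals is standard, and where the conclusion $\frac{d}{dt}\exp{tA_\h{k}} = A_\h{k}\exp{tA_\h{k}}$ is classical. This avoids dealing directly with convergence in the bicomplex norm~\eqref{2.8}, whose submultiplicativity carries the awkward factor $\sqrt{2}$ from~\eqref{2.9}; by working componentwise we sidestep that complication entirely, and the $\sqrt{2}$ never enters the argument.
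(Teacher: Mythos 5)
Your proof is correct and follows essentially the same route as the paper's: decompose $\exp{tA}$ in the idempotent basis via~\eqref{5.2a}, differentiate each complex component using the classical result $\frac{d}{dt}\exp{tA_\h{k}} = A_\h{k}\exp{tA_\h{k}}$, and reassemble to get $A\exp{tA}$. The extra care you take in justifying term-by-term differentiation of the component series is a welcome addition, but the argument is the same.
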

%
%\begin{proof}
\noindent\emph{Proof.}
\begin{align*}
\frac{d}{dt} \exp{tA}
&= \frac{d}{dt} \left[ \ee \, \exp{t A_\hh}
+ \eee \, \exp{t A_\hhh} \right] \\
&= \ee A_\hh \, \exp{t A_\hh} + \eee A_\hhh \, \exp{t A_\hhh}
= A \, \exp{tA} .
\tag*{\qed}
\end{align*}
%\end{proof}
%%%%%

%%%%%
\medskip
\begin{thm}
If $H$ is self-adjoint, $\exp{\ii H}$ is unitary.
\label{Theo5.2a}
\end{thm}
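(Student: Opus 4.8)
The plan is to exploit the idempotent decomposition and reduce the bicomplex claim to two classical statements on the complex Hilbert space $V$, exactly as was done for Theorem~\ref{Theo5.1a}. First I would rewrite $\exp{\ii H}$ in the idempotent basis. Since $\ii\in\C(\ii)$, formula~\eqref{2.12a} gives $P_1(\ii)=\ii=P_2(\ii)$, so that $\ii=\ee\,\ii+\eee\,\ii$. Multiplying by $H=\ee H_\hh+\eee H_\hhh$ and using $\ee\eee=0$, $\ee^2=\ee$, $\eee^2=\eee$ yields $(\ii H)_\hh=\ii H_\hh$ and $(\ii H)_\hhh=\ii H_\hhh$. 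Equation~\eqref{5.2a} then gives
\begin{equation*}
\exp{\ii H}=\ee\,\exp{\ii H_\hh}+\eee\,\exp{\ii H_\hhh}.
\end{equation*}

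Next, by Lemma~\ref{Lemma4.2} each operator $H_\h{k}=P_k(H)$ is self-adjoint on the complex Hilbert space $V$ with respect to $\scalarmath{\cdot}{\cdot}_\h{k}$. Invoking the classical finite-dimensional fact that $\exp{\ii A}$ is unitary whenever $A$ is self-adjoint on a complex Hilbert space, I conclude that $U_\hh:=\exp{\ii H_\hh}$ and $U_\hhh:=\exp{\ii H_\hhh}$ are each unitary on $V$, that is, $U_\h{k}^*U_\h{k}=I$, where $*$ denotes the $\C(\ii)$-adjoint.

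Finally I would assemble the two pieces into the bicomplex adjoint. Writing $U:=\exp{\ii H}=\ee U_\hh+\eee U_\hhh$ and reading~\eqref{4.15} as $P_k(U^*)=P_k(U)^*=U_\h{k}^*$, I obtain $U^*=\ee U_\hh^*+\eee U_\hhh^*$. Multiplying out and using the idempotent relations,
\begin{equation*}
U^*U=\ee\,U_\hh^*U_\hh+\eee\,U_\hhh^*U_\hhh=\ee I+\eee I=I,
\end{equation*}
so $\exp{\ii H}$ is unitary by Definition~\ref{D5a}.

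The step needing the most care is the first: one must be sure that the idempotent projection commutes with exponentiation, so that the $k$th component of $\exp{\ii H}$ genuinely equals $\exp{\ii H_\h{k}}$. This is precisely the content of~\eqref{5.2a}, so the difficulty reduces to bookkeeping. A tempting direct alternative uses~\eqref{4.14} together with $\ii^{\dag_3}=-\ii$ to obtain $(\ii H)^*=-\ii H$, whence formally $(\exp{\ii H})^*=\exp{-\ii H}$ and $U^*U=\exp{-\ii H}\exp{\ii H}=I$; the snag there is justifying that the bicomplex adjoint may be taken term by term through a convergent infinite series, which is exactly the technicality the idempotent route sidesteps by passing to the two finite-dimensional complex components.
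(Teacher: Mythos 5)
Your proposal is correct and follows essentially the same route as the paper: decompose $\exp{\ii H}$ via~\eqref{5.2a} into $\ee\,\exp{\ii H_\hh}+\eee\,\exp{\ii H_\hhh}$, use the complex self-adjointness of $H_\h{k}$ to control the adjoint of each component, and multiply out using the idempotent relations to get $\ee I+\eee I=I$. The only cosmetic difference is that the paper writes $[\exp{\ii H_\h{k}}]^{*}=\exp{-\ii H_\h{k}}$ explicitly rather than citing unitarity of $\exp{\ii H_\h{k}}$ on $V$, which is the same classical fact.
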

%
%\begin{proof}
\noindent\emph{Proof.}
Since $H_\hh$ and $H_\hhh$ are self-adjoint in the usual
(complex) sense, we have
\begin{align*}
&\left[ \exp{\ii H} \right]^* \exp{\ii H} \\
&= \left[ \ee \, \exp{\ii H_\hh} + \eee \, \exp{\ii H_\hhh} \right]^*
\left[ \ee \, \exp{\ii H_\hh} + \eee \, \exp{\ii H_\hhh} \right] \\
&= \left[ \ee \, \exp{-\ii H_\hh} + \eee \, \exp{-\ii H_\hhh} \right]
\left[ \ee \, \exp{\ii H_\hh} + \eee \, \exp{\ii H_\hhh} \right] \\
&= \ee \, \exp{-\ii H_\hh} \exp{\ii H_\hh}
+ \eee \, \exp{-\ii H_\hhh} \exp{\ii H_\hhh} \\
&= \ee I + \eee I = I .
\tag*{\qed}
\end{align*}
%\end{proof}
%%%%%

\subsection{Evolution Operator}

A generalization of the Schr\"odinger equation
to bicomplex numbers was proposed in~\cite{Rochon2}.
It can be adapted to finite-dimensional modules as
\begin{equation}
\ii\hbar\frac{d}{dt}\ket{\psi(t)}
= H\ket{\psi(t)} , \label{5.3}
\end{equation}
where $H$ is a self-adjoint bicomplex operator
(called the Hamiltonian).
Note that there is no gain in generality
if one adds an arbitrary invertible bicomplex constant
$\xi$ on the left-hand side, i.e.
\begin{equation}
\ii \xi \hbar\frac{d}{dt}\ket{\psi(t)}
= H\ket{\psi(t)} . \label{5.3a}
\end{equation}
Indeed one can then write
\begin{equation}
\ii \hbar\frac{d}{dt}\ket{\psi(t)}
= H' \ket{\psi(t)} , \label{5.3b}
\end{equation}
with $H' = \xi^{-1} H$.  For $H'$ to be self-adjoint
one must have $\xi^{\dag_3} = \xi$, so that
$\xi = \ee \xi_\hh + \eee \xi_\hhh$, with $\xi_\hh$
and $\xi_\hhh$ real.  In this case~\eqref{5.3a}
amounts to~\eqref{5.3} with a redefinition of the
Hamiltonian.

From Theorems~\ref{Theo5.1a} and~\ref{Theo5.2a} we
immediately obtain

%%%%%
\begin{thm}
If $H$ doesn't depend on time, solutions of~\eqref{5.3}
are given by $\ket{\psi (t)} = U(t, t_0) \ket{\psi (t_0)}$,
where $\ket{\psi (t_0)}$ is any ket and
\begin{equation*}
U(t, t_0) = \exp{- \frac{\ii}{\hbar} (t-t_0) H} .
\end{equation*}
\label{Theo5.3a}
\end{thm}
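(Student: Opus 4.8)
The plan is to verify that the proposed $U(t,t_0) = \exp{-\frac{\ii}{\hbar}(t-t_0)H}$ both solves the bicomplex Schr\"odinger equation~\eqref{5.3} and produces the correct initial data, and then to invoke uniqueness. First I would confirm that $U(t,t_0)$ is a well-defined bicomplex operator: since $H$ is self-adjoint, Theorem~\ref{Theo5.2a} guarantees that $\exp{-\frac{\ii}{\hbar}(t-t_0)H}$ is unitary (with $-\frac{1}{\hbar}(t-t_0)H$ self-adjoint for each fixed $t$), so the exponential series converges to a genuine operator on $M$ via the idempotent decomposition~\eqref{5.2a}. This handles existence of the candidate solution.

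Next I would differentiate. Setting $A = -\frac{\ii}{\hbar}H$ in Theorem~\ref{Theo5.1a}, which applies because $t \mapsto t-t_0$ is a real parameter shift, I get
\begin{equation*}
\frac{d}{dt}\ket{\psi(t)}
= \frac{d}{dt}\Big( U(t,t_0)\ket{\psi(t_0)} \Big)
= -\frac{\ii}{\hbar} H \, U(t,t_0)\ket{\psi(t_0)}
= -\frac{\ii}{\hbar} H \ket{\psi(t)} .
\end{equation*}
Multiplying both sides by $\ii\hbar$ and using $\ii^2 = -1$ (note $\iic = -1$ from the defining relations~\eqref{2.2}) recovers exactly $\ii\hbar\frac{d}{dt}\ket{\psi(t)} = H\ket{\psi(t)}$, which is~\eqref{5.3}. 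The initial condition is immediate: at $t = t_0$ the exponent vanishes, so $U(t_0,t_0) = \exp{0} = I$ and hence $\ket{\psi(t_0)} = U(t_0,t_0)\ket{\psi(t_0)}$ as required.

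The remaining issue is \emph{uniqueness} of the solution, which is the one point that needs genuine care rather than routine computation. The cleanest route is to pass to the idempotent projections: applying $P_k$ ($k=1,2$) to~\eqref{5.3} and using~\eqref{2.28} together with the fact that $P_k(\ii) = \ii$, the bicomplex equation splits into two decoupled complex Schr\"odinger equations $\ii\hbar\frac{d}{dt}\ket{\psi(t)}_\h{k} = H_\h{k}\ket{\psi(t)}_\h{k}$ on the finite-dimensional complex Hilbert space $V$. Each of these is a standard linear ODE with constant (time-independent) coefficients, for which the solution with given initial data is classically unique, the solution being $\ket{\psi(t)}_\h{k} = \exp{-\frac{\ii}{\hbar}(t-t_0)H_\h{k}}\ket{\psi(t_0)}_\h{k}$. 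Recombining via $\ket{\psi(t)} = \ee\ket{\psi(t)}_\hh + \eee\ket{\psi(t)}_\hhh$ and comparing with~\eqref{5.2a} shows this reassembled solution is precisely $U(t,t_0)\ket{\psi(t_0)}$, so the bicomplex solution is unique. The main obstacle, such as it is, lies here: one must be careful that the decomposition legitimately reduces the bicomplex ODE to two independent complex ODEs and that no zero-divisor subtleties obstruct the classical uniqueness argument — but since $V$ is an honest complex Hilbert space by Corollary~\ref{Hilbert}, the standard theory applies componentwise without difficulty.
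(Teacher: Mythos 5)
Your proposal is correct and takes essentially the same route as the paper, whose entire proof is the remark that the result follows immediately from Theorems~\ref{Theo5.1a} and~\ref{Theo5.2a} (differentiation of the operator exponential plus unitarity); your verification step is exactly that computation made explicit. The uniqueness argument via the idempotent splitting into two classical complex Schr\"odinger equations is a sound supplement that the paper omits entirely, and it is the right way to justify the word ``solutions'' in the statement.
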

%%%%%
The operator $U(t, t_0)$ is unitary and is a generalization of the
\emph{evolution operator} of standard quantum
mechanics~\cite{Marchildon}.

%%%%%%%%%%%%%%%%%%%%%%%%%%%%%%%%%%%%%%%%%%%%%%%%%%%%%%%%%%%%%%%%%%%%%%%%%%%%%%%%%%%%%%%%
%\newpage
\section{Conclusion}

We have derived a number of new results on
finite-dimensional bicomplex matrices, modules,
operators and Hilbert spaces, including the generalization
of the spectral decomposition theorem.  All these
concepts are deeply connected with the formalism
of quantum mechanics.  We believe that many if not
all of them can be extended to infinite-dimensional
Hilbert spaces.

%%%%%%%%%%%%%%%%%%%%%%%%%%%%%%%%%%%%%%%%%%%%%%%%%%%%%%%%%%%%%%%%%%%%%%%%%%%%%%%%%%%%%%%%%%%%%%%%%%%%%%%%%%%%%%%%%%%%%%%%%%%%%%%%%%%%
%\newpage

\end{document}